\newtheorem{theorem}{Theorem}
\newtheorem{corollary}[theorem]{Corollary}
\newtheorem{lemma}[theorem]{Lemma}
\newtheorem{problem}[theorem]{Problem}
\newenvironment{proof}[1][Proof]{\noindent \textbf{#1.} }{\  \rule{0.5em}{0.5em}}
\begin{document}

\title{New subclass of the class of close-to-convex harmonic mappings
defined by a third-order differential inequality}
\author{Serkan \c{C}akmak, Elif Ya\c{s}ar and Sibel Yal\c{c}\i n \\
Department of Mathematics, Faculty of Arts and Sciences, \\
Bursa Uludag University, 16059, G\"{o}r\"{u}kle, Bursa, Turkey.\\
E-mail: serkan.cakmak64@gmail.com, elifyasar@yahoo.com \\
and syalcin@uludag.edu.tr}
\maketitle

\begin{abstract}
In this paper, we introduce a new subclass of harmonic functions $\mathfrak{f%
}=\mathfrak{s}+\overline{\mathfrak{t}}$ in the open unit disk $\mathcal{U}%
=\left \{ z\in \mathbb{C}:\left \vert z\right \vert <1\right \} $ satisfying%
\newline

\noindent ${\text{Re}}\left[ \gamma \mathfrak{s}^{\prime }(z)+\delta z%
\mathfrak{s}^{\prime \prime }(z)+\left( \frac{\delta -\gamma }{2}\right)
z^{2}\mathfrak{s}^{\prime \prime \prime }\left( z\right) -\lambda \right]
>\left \vert \gamma \mathfrak{t}^{\prime }(z)+\delta z\mathfrak{t}^{\prime
\prime }(z)+\left( \frac{\delta -\gamma }{2}\right) z^{2}\mathfrak{t}%
^{\prime \prime \prime }\left( z\right) \right \vert,$\newline
\noindent where $0\leq \lambda <\gamma \leq \delta, z\in \mathcal{U}.$ We
determine several properties of this class such as close-to-convexity,
coefficient bounds, and growth estimates. We also prove that this class is
closed under convex combination and convolution of its members. Furthermore,
we investigate the properties of fully starlikeness and fully convexity of
the class.

Keywords: harmonic, univalent, close-to-convex, coefficient estimates,
convolution.
\end{abstract}

\section{Introduction}

\bigskip Let $\mathcal{H}$ denote the class of complex-valued harmonic
functions $\mathfrak{f}=\mathfrak{s}+\overline{\mathfrak{t}}$ defined in the
open unit disk $\mathcal{U}=\left \{ z\in \mathbb{C}:\left \vert
z\right
\vert <1\right \} ,$ and normalized by $\mathfrak{f}(0)=\mathfrak{f}%
_{z}(0)-1=0$. Also, let $\mathcal{H}^{0}=\left \{ \mathfrak{f}\in \mathcal{H}%
:\mathfrak{f}_{\overline{z}}(0)=0\right \} $. Each function $\mathfrak{f}\in 
\mathcal{H}^{0}$ can be expressed as $\mathfrak{f}=\mathfrak{s}+\overline{%
\mathfrak{t}},$ where%
\begin{equation}
\mathfrak{s}(z)=z+\overset{\infty }{\underset{m=2}{\sum }}a_{m}z^{m},\text{
\ \ \ }\mathfrak{t}(z)=\overset{\infty }{\underset{m=2}{\sum }}b_{m}z^{m}
\label{eqH}
\end{equation}%
are analytic in $\mathcal{U}$. A necessary and sufficient condition for $%
\mathfrak{f}$ to be locally univalent and sense-preserving in $\mathcal{U}$
is that $\left \vert \mathfrak{s}^{\prime }(z)\right \vert >\left \vert 
\mathfrak{t}^{\prime }(z)\right \vert $ in $\mathcal{U}$. See \cite%
{Clunie,Durenharm}.

Denote by $\mathcal{S}_{H}$ the class of functions $\mathfrak{f}=\mathfrak{s}%
+\overline{\mathfrak{t}}$ that are harmonic, univalent and sense-preserving
in the unit disk $\mathcal{U}$. Further, let $\mathcal{S}_{H}^{0}=\left \{ 
\mathfrak{f}\in \mathcal{S}_{H}:\mathfrak{f}_{\overline{z}}(0)=0\right \} .$
Note that, with $\mathfrak{t}(z)=0,$ the classical family $\mathcal{S}$ of
analytic univalent and normalized functions in $\mathcal{U}$ is a subclass
of $\mathcal{S}_{H}^{0}$ $,$ just as the family $\mathcal{A}$ of analytic
and normalized functions in $\mathcal{U}$ is a subclass of $\mathcal{H}^{0}.$
A simply connected subdomain of $\mathbb{C}$ is said to be close-to-convex
if its complement in $\mathbb{C}$ can be written as the union of
non-crossing half-lines.

Let $\mathcal{K},\mathcal{S}^{\ast }$ and $\mathcal{C}$ be the subclasses of 
$\mathcal{S}$ mapping $\mathcal{U}$ onto convex, starlike and
close-to-convex domains, respectively, just as $\mathcal{K}_{H}^{0},$ $%
\mathcal{S}_{H}^{\ast ,0}$ and $\mathcal{C}_{H}^{0}$ are the subclasses of $%
\mathcal{S}_{H}^{0}$ mapping $\mathcal{U}$ onto their respective domains.

In \cite{Hernandez}, Hernandez and Martin introduced the notion of stable
harmonic mappings. A sense-preserving harmonic mapping $\mathfrak{f}=%
\mathfrak{s}+\overline{\mathfrak{t}}$ is said to be stable harmonic
univalent (resp. stable harmonic convex, stable harmonic starlike, or stable
harmonic close-to-convex) in $\mathcal{U}$, if all functions $\mathfrak{f}%
_{\epsilon }=\mathfrak{s}+\epsilon \overline{\mathfrak{t}}$ with $|\epsilon
|=1$ are univalent (resp. convex, starlike, or close-to-convex) in $\mathcal{%
U}$. It is proved that $\mathfrak{f=s+}\overline{t}$ is stable harmonic
univalent (resp. convex, starlike, or close-to-convex) if and only if $%
F_{\epsilon }\mathfrak{=s+\epsilon t}$ are univalent (resp. convex,
starlike, or close-to-convex) in $\mathcal{U}$ for each $|\epsilon |=1.$

Recall that, convexity and starlikeness are hereditary properties for
conformal mappings and they do not extend to harmonic functions \cite{Duren}%
. The failure of hereditary properties leads to the notion of fully starlike
and fully convex functions which introduced by Chuaqui, Duren and Osgood 
\cite{Chuaqui}. A harmonic function $\mathfrak{f}$ of the unit disk is said
to be fully convex, if it maps every circle $\left \vert z\right \vert =r<1$
in a one-to-one manner onto a convex curve. Such a harmonic mapping $%
\mathfrak{f}$ with $\mathfrak{f}(0)=0$ is fully starlike if it maps every
circle $\left \vert z\right \vert =r<1$ in a one-to-one manner onto a curve
that bounds a domain starlike with respect to the origin. Denote by $%
\mathcal{FK}_{H}^{0}$ and $\mathcal{FS}_{H}^{\ast ,0}$ the subclasses of $%
\mathcal{K}_{H}^{0}$ and $\mathcal{S}_{H}^{\ast ,0}$ consisting of fully
convex and fully starlike functions, respectively. In \cite{Hernandez,
Nagpal-fully, Nagpaljkms}, it is proved that stable harmonic convex (or
stable harmonic starlike) mappings in $\mathcal{U}$ are fully convex (or
fully starlike) in $\mathcal{U}.$

In 2014, Nagpal and Ravichandran \cite{Nagpaljkms} studied a class $%
W_{H}^{0} $ of functions $\mathfrak{f}\in \mathcal{H}^{0}$ satisfying the
condition ${\text{Re}}\left[ \mathfrak{s}^{\prime }(z)+z\mathfrak{s}^{\prime
\prime }(z)\right] >\left \vert \mathfrak{t}^{\prime }(z)+z\mathfrak{t}%
^{\prime \prime }(z)\right \vert $ for $z\in \mathcal{U}$ which is harmonic
analogue of the class $W$ defined by Chichra \cite{Chichra} consisting of
functions $\mathfrak{f}\in \mathcal{A}$ satisfying the condition ${\text{Re}}%
\left[ \mathfrak{f}^{\prime }(z)+z\mathfrak{f}^{\prime \prime }(z)\right] >0$
for $z\in \mathcal{U}$ . It is stated that $W_{H}^{0}\subset \mathcal{S}%
_{H}^{\ast ,0}$ and in particular, the members of the class are fully
starlike in $\mathcal{U}.$

Ghosh and Vasudevarao \cite{Ghosh} investigated radius of convexity for the
partial sums of members of the class $W_{H}^{0}\left( \delta \right) $ of
functions $\mathfrak{f}\in \mathcal{H}^{0}$ satisfying the condition \newline
${\text{Re}}\left[ \mathfrak{s}^{\prime }(z)+\delta z\mathfrak{s}^{\prime
\prime }(z)\right] >\left \vert \mathfrak{t}^{\prime }(z)+\delta z\mathfrak{t%
}^{\prime \prime }(z)\right \vert $ for $\delta \geq 0,$ and $z\in \mathcal{U%
}. $

Further, Rajbala and Prajapat \cite{Rajbala} studied the class $%
W_{H}^{0}\left( \delta ,\lambda \right) $ of functions $\mathfrak{f}\in 
\mathcal{H}^{0}$ satisfying the condition ${\text{Re}}\left[ \mathfrak{s}%
^{\prime }(z)+\delta z\mathfrak{s}^{\prime \prime }(z)-\lambda \right]
>\left \vert \mathfrak{t}^{\prime }(z)+\delta z\mathfrak{t}^{\prime \prime
}(z)\right \vert $ for $\delta \geq 0,$ $0\leq \lambda <1,$ and $z\in 
\mathcal{U}.$ They constructed harmonic polynomials involving Gaussian
hypergeometric function which belong to the class $W_{H}^{0}\left( \delta
,\lambda \right) .$

Very recently, Ya\c{s}ar and Yal\c{c}\i n \cite{yasar} introduced the class $%
R_{H}^{0}\left( \delta ,\gamma \right) $ of functions $\mathfrak{f}\in 
\mathcal{H}^{0}$ satisfying the condition 
\begin{equation*}
{\text{Re}}\left[ \mathfrak{s}^{\prime }(z)+\delta z\mathfrak{s}^{\prime
\prime }(z)+\gamma z^{2}\mathfrak{s}^{\prime \prime \prime }\left( z\right) %
\right] >\left \vert \mathfrak{t}^{\prime }(z)+\delta z\mathfrak{t}^{\prime
\prime }(z)+\gamma z^{2}\mathfrak{t}^{\prime \prime \prime }\left( z\right)
\right \vert
\end{equation*}%
for $\delta \geq \gamma \geq 0,$ $z\in \mathcal{U}.$

In all studies mentioned above \cite{Nagpaljkms, Ghosh, Rajbala, yasar}, it
is proved that the functions in corresponding classes are close-to-convex.
Also, coefficient bounds, growth estimates, and convolution properties of
the classes are obtained.

\bigskip Denote by $\mathcal{R}_{H}^{0}(\gamma ,\delta ,\lambda ),$ the
class of functions $\mathfrak{f}=\mathfrak{s}+\overline{\mathfrak{t}}\in 
\mathcal{H}^{0}$ and satisfy%
\begin{equation}
{\text{Re}}\left[ \gamma \mathfrak{s}^{\prime }(z)+\delta z\mathfrak{s}%
^{\prime \prime }(z)+\left( \frac{\delta -\gamma }{2}\right) z^{2}\mathfrak{s%
}^{\prime \prime \prime }\left( z\right) -\lambda \right] >\left \vert
\gamma \mathfrak{t}^{\prime }(z)+\delta z\mathfrak{t}^{\prime \prime
}(z)+\left( \frac{\delta -\gamma }{2}\right) z^{2}\mathfrak{t}^{\prime
\prime \prime }\left( z\right) \right \vert  \label{BH}
\end{equation}%
where $0\leq \lambda <\gamma \leq \delta .$

It is evident that $W_{H}^{0}\equiv W_{H}^{0}\left( 1\right) \equiv \mathcal{%
R}_{H}^{0}(1,1,0),$ $W_{H}^{0}\left( 1,\lambda \right) \equiv \mathcal{R}%
_{H}^{0}(1,1,\lambda ),$ $R_{H}^{0}\left( \delta ,\frac{\delta -1}{2}\right)
\equiv \mathcal{R}_{H}^{0}(1,\delta ,0).$

Let $\mathcal{R}(\gamma ,\delta ,\lambda )$ denote a class of functions $%
\mathfrak{f}\in \mathcal{A}$ such that%
\begin{equation}
{\text{Re}}\left \{ \gamma \mathfrak{f}^{\prime }(z)+\delta z\mathfrak{f}%
^{\prime \prime }(z)+\left( \frac{\delta -\gamma }{2}\right) z^{2}\mathfrak{f%
}^{\prime \prime \prime }\left( z\right) \right \} >\lambda \text{ \ \ }%
\left( 0\leq \lambda <\gamma \leq \delta \right) .
\end{equation}%
The class $\mathcal{R}(\gamma ,\delta ,\lambda )$ is a particular case of
the class which is studied by Al-Refai \cite{Refai}. The starlikeness and
convexity of the class $\mathcal{R}(1,\delta ,\lambda )$ are studied in \cite%
{Rosihan2011, Rosihan2018}.

In this paper, we mainly deal with the functions $\mathfrak{f}=\mathfrak{s}+%
\overline{\mathfrak{t}}$ $\in \mathcal{H}^{0}$ of the class $\mathcal{R}%
_{H}^{0}(\gamma ,\delta ,\lambda )$ which is defined by the third-order
differential inequality (\ref{BH}). In the second section, we prove that the
members of the class $\mathcal{R}_{H}^{0}(\gamma ,\delta ,\lambda )$ are
close-to-convex. We also obtain coefficient bounds, growth estimates, and
sufficient coefficient condition of this class. In the third section, we
prove that this class is closed under convex combination and convolution of
its members. In the last section, we investigate the radii of fully
starlikeness and fully convexity of the class $\mathcal{R}_{H}^{0}(\gamma
,\delta ,\lambda ),$ and we give a result due to the class $\mathcal{R}%
_{H}^{0}(1,\delta ,\lambda )$ using previous works \cite{Rosihan2018} and 
\cite{Nagpaljkms}.

\section{Close-to-convexity, coefficient bounds, growth estimates}

First, we give a result of Clunie and Sheil-Small \cite{Clunie} which
derives a sufficient condition for $\mathfrak{f}\in \mathcal{H}$ to be
close-to-convex.

\begin{lemma}
Suppose $\mathfrak{s}$ and $\mathfrak{t}$ are analytic in $\mathcal{U}$ with 
$\left \vert \mathfrak{t}^{\prime }(0)\right \vert <\left \vert \mathfrak{s}%
^{\prime }(0)\right \vert $ and $F_{\epsilon }=\mathfrak{s}+\epsilon 
\mathfrak{t}$ is close-to-convex for each $\epsilon $ $\left( \left \vert
\epsilon \right \vert =1\right) ,$ then $\mathfrak{f}=\mathfrak{s}+\overline{%
\mathfrak{t}}$ is close-to-convex in $\mathcal{U}.$
\end{lemma}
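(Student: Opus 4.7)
The plan is to verify the three ingredients characterising close-to-convexity of a harmonic mapping: sense-preservation, global univalence, and close-to-convexity of the image. Each should be extracted from the hypothesis that every analytic shear $F_\epsilon=\mathfrak{s}+\epsilon\mathfrak{t}$, $|\epsilon|=1$, is close-to-convex (hence in particular univalent and locally conformal on $\mathcal{U}$).

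For sense-preservation I would first note that univalence of $F_\epsilon$ forces $F_\epsilon'(z)=\mathfrak{s}'(z)+\epsilon\mathfrak{t}'(z)\neq 0$ throughout $\mathcal{U}$ for every unit $\epsilon$. Were $|\mathfrak{s}'(z_0)|=|\mathfrak{t}'(z_0)|$ at some $z_0\in\mathcal{U}$, then choosing the unimodular $\epsilon=-\mathfrak{s}'(z_0)/\mathfrak{t}'(z_0)$ would annihilate $F_\epsilon'(z_0)$, a contradiction. Hence $|\mathfrak{s}'|-|\mathfrak{t}'|$ is non-vanishing in $\mathcal{U}$, and the strict inequality at the origin together with continuity fixes the sign to $|\mathfrak{t}'(z)|<|\mathfrak{s}'(z)|$ on $\mathcal{U}$, which is the required sense-preservation condition. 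For global injectivity, suppose $\mathfrak{f}(z_1)=\mathfrak{f}(z_2)$ with $z_1\ne z_2$; splitting into analytic and anti-analytic parts gives $\mathfrak{s}(z_1)-\mathfrak{s}(z_2)=\overline{\mathfrak{t}(z_2)-\mathfrak{t}(z_1)}$. If $\mathfrak{t}(z_1)=\mathfrak{t}(z_2)$ then also $\mathfrak{s}(z_1)=\mathfrak{s}(z_2)$ and every $F_\epsilon$ identifies $z_1$ with $z_2$; otherwise $\epsilon:=\overline{\mathfrak{t}(z_2)-\mathfrak{t}(z_1)}/(\mathfrak{t}(z_2)-\mathfrak{t}(z_1))$ has modulus one and satisfies $F_\epsilon(z_1)=F_\epsilon(z_2)$, again contradicting univalence of a shear.

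The remaining task, and the principal obstacle, is to show $\mathfrak{f}(\mathcal{U})$ itself is close-to-convex, i.e.\ its complement is a union of pairwise non-crossing half-lines. My plan is to promote the analytic Kaplan-type witness for each $F_\epsilon$ to a single witness for $\mathfrak{f}$: since $F_\epsilon'=\mathfrak{s}'+\epsilon\mathfrak{t}'$ is a close-to-convex analytic function, there is a convex analytic $\phi_\epsilon$ and a direction $\alpha_\epsilon$ with ${\rm Re}(e^{i\alpha_\epsilon}F_\epsilon'(z)/\phi_\epsilon'(z))>0$, and these data build out of the same analytic ingredients $\mathfrak{s}',\mathfrak{t}'$ that determine $\mathfrak{f}_z$ and $\mathfrak{f}_{\bar z}=\overline{\mathfrak{t}'}$. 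Equivalently, one may argue geometrically: for each $w\notin \mathfrak{f}(\mathcal{U})$, extract from each close-to-convex image $F_\epsilon(\mathcal{U})$ a half-line in its complement and, by a compactness and limiting argument in $\epsilon$, produce a single half-line from $w$ avoiding $\mathfrak{f}(\mathcal{U})$, then verify that the resulting family is pairwise non-crossing. The genuinely subtle step is assembling the $\epsilon$-parametrised data into one coherent foliation of the complement of $\mathfrak{f}(\mathcal{U})$; the sense-preservation and univalence reductions above are essentially bookkeeping by comparison.
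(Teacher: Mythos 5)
First, a point of reference: the paper does not prove this lemma at all --- it is quoted as a known result of Clunie and Sheil-Small \cite{Clunie} --- so there is no in-paper argument to compare against, and your attempt has to stand on its own. Your first two steps do stand: choosing $\epsilon=-\mathfrak{s}'(z_0)/\mathfrak{t}'(z_0)$ (when $\mathfrak{t}'(z_0)\neq 0$) rules out $|\mathfrak{s}'(z_0)|=|\mathfrak{t}'(z_0)|$, and continuity propagates the strict inequality from the origin; likewise the choice $\epsilon=\overline{\mathfrak{t}(z_2)-\mathfrak{t}(z_1)}/(\mathfrak{t}(z_2)-\mathfrak{t}(z_1))$ correctly converts a coincidence $\mathfrak{f}(z_1)=\mathfrak{f}(z_2)$ into a coincidence of some shear $F_\epsilon$. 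Both arguments are complete and correct.

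The third step, however, is where the entire content of the lemma lives, and you have not supplied it: you name two candidate strategies and defer the "genuinely subtle step," but both strategies founder on the same obstruction. There is no containment relation between $\mathfrak{f}(\mathcal{U})$ and the sets $F_\epsilon(\mathcal{U})$; a point $w\notin\mathfrak{f}(\mathcal{U})$ may lie in every $F_\epsilon(\mathcal{U})$, so there are no half-lines "in the complement of $F_\epsilon(\mathcal{U})$ from $w$" to pass to a limit with, and the Kaplan witnesses $(\phi_\epsilon,\alpha_\epsilon)$ need not admit a common choice (if they did, the lemma would reduce to the far weaker harmonic Kaplan-type criterion). The bridge Clunie and Sheil-Small actually use is neither of these; it is the elementary identity
\begin{equation*}
{\text{Re}}\left( e^{-i\beta}\mathfrak{f}(z)\right) ={\text{Re}}\left( e^{-i\beta}\mathfrak{s}(z)\right) +{\text{Re}}\left( e^{i\beta}\mathfrak{t}(z)\right) ={\text{Re}}\left( e^{-i\beta}F_{e^{2i\beta}}(z)\right) ,
\end{equation*}
which says that the projection of the harmonic map onto any direction $e^{i\beta}$ coincides with the projection of one specific analytic shear onto that same direction. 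Close-to-convexity is then transferred direction by direction via a characterization of "univalent with close-to-convex image" for light, open, sense-preserving maps stated purely in terms of these directional projections (an argument-principle count of how level sets ${\text{Re}}(e^{-i\beta}f)=c$ meet the circles $|z|=r$). Without this identity, or an equally concrete substitute, the $\epsilon$-parametrized data cannot be assembled into a statement about $\mathfrak{f}(\mathcal{U})$, and your proof of the main assertion is incomplete.
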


\begin{theorem}
The harmonic mapping $\mathfrak{f}=\mathfrak{s}+\overline{\mathfrak{t}}\in 
\mathcal{R}_{H}^{0}(\gamma ,\delta ,\lambda )$ if and only if$\ F_{\epsilon
}=\mathfrak{s}+\epsilon \mathfrak{t}\in \mathcal{R}(\gamma ,\delta ,\lambda
) $ for each $\epsilon \left( \left \vert \epsilon \right \vert =1\right) .$
\end{theorem}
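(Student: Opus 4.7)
The plan is to exploit the linearity of the differential operator
$L[g](z) := \gamma g'(z) + \delta z g''(z) + \tfrac{\delta-\gamma}{2} z^{2} g'''(z)$
and reduce the equivalence to a single, elementary observation about complex numbers: for fixed $w_{1},w_{2}\in\mathbb{C}$,
$$\operatorname{Re}(w_{1}) > |w_{2}| \iff \operatorname{Re}(w_{1}+\epsilon w_{2}) > 0 \text{ for every } \epsilon \text{ with } |\epsilon|=1.$$
Setting $A(z):=L[\mathfrak{s}](z)$ and $B(z):=L[\mathfrak{t}](z)$, linearity gives $L[F_{\epsilon}](z)=A(z)+\epsilon B(z)$, so the defining inequality of $\mathcal{R}_{H}^{0}(\gamma,\delta,\lambda)$ reads $\operatorname{Re}(A(z)-\lambda)>|B(z)|$ while that of $\mathcal{R}(\gamma,\delta,\lambda)$ applied to $F_{\epsilon}$ reads $\operatorname{Re}(A(z)+\epsilon B(z))>\lambda$. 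Thus the theorem becomes exactly the pointwise statement above, applied to $w_{1}=A(z)-\lambda$ and $w_{2}=B(z)$ at each $z\in\mathcal{U}$.

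For the forward implication I would take any $\epsilon$ with $|\epsilon|=1$ and estimate
$$\operatorname{Re}\bigl(A(z)+\epsilon B(z)-\lambda\bigr) \ge \operatorname{Re}(A(z)-\lambda) - |\epsilon B(z)| = \operatorname{Re}(A(z)-\lambda) - |B(z)| > 0,$$
which shows $F_{\epsilon}\in\mathcal{R}(\gamma,\delta,\lambda)$. For the converse, fix $z\in\mathcal{U}$; if $B(z)=0$ the desired inequality follows by choosing any unit $\epsilon$, while if $B(z)\neq 0$ I would pick the particular $\epsilon=-\overline{B(z)}/|B(z)|$, which has modulus one and satisfies $\epsilon B(z)=-|B(z)|$. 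Substituting into the hypothesis gives
$$\operatorname{Re}(A(z)) - |B(z)| = \operatorname{Re}(A(z)+\epsilon B(z)) > \lambda,$$
i.e.\ $\operatorname{Re}(A(z)-\lambda)>|B(z)|$, so $\mathfrak{f}\in\mathcal{R}_{H}^{0}(\gamma,\delta,\lambda)$.

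The only subtlety worth flagging is that in the converse direction the choice of $\epsilon$ depends on the point $z$; this is legitimate because the hypothesis provides the inequality for \emph{every} $\epsilon$ on the unit circle, so a pointwise selection is allowed. I do not anticipate a serious obstacle: the argument is essentially the triangle inequality together with the sharp choice of phase that saturates it, and no subclass-specific machinery beyond the linearity of $L$ and the definitions is needed.
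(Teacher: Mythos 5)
Your proof is correct and follows essentially the same route as the paper's: linearity of the operator, the triangle inequality for the forward direction, and a phase choice of $\epsilon$ for the converse. You are in fact slightly more careful than the paper, since you make explicit the choice $\epsilon=-\overline{B(z)}/|B(z)|$ (which the paper only calls an ``appropriate choice''), handle the case $B(z)=0$ separately, and correctly use $\geq$ where the triangle inequality need not be strict.
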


\begin{proof}
Suppose $\mathfrak{f}=\mathfrak{s}+\overline{\mathfrak{t}}$ $\in \mathcal{R}%
_{H}^{0}(\gamma ,\delta ,\lambda ).$ For each $\left \vert \epsilon
\right
\vert =1,$ 
\begin{align*}
& {\text{Re}}\left \{ \gamma F_{\epsilon }^{\prime }(z)+\delta zF_{\epsilon
}^{\prime \prime }(z)+\left( \frac{\delta -\gamma }{2}\right)
z^{2}F_{\epsilon }^{\prime \prime \prime }(z)\right \} \\
& ={\text{Re}}\left \{ \gamma \mathfrak{s}^{\prime }(z)+\delta z\mathfrak{s}%
^{\prime \prime }(z)+\left( \frac{\delta -\gamma }{2}\right) z^{2}\mathfrak{s%
}^{\prime \prime \prime }\left( z\right) \right. \\
& +\left. \epsilon \left( \gamma \mathfrak{t}^{\prime }(z)+\delta z\mathfrak{%
t}^{\prime \prime }(z)+\left( \frac{\delta -\gamma }{2}\right) z^{2}%
\mathfrak{t}^{\prime \prime \prime }\left( z\right) \right) \right \} \\
& >{\text{Re}}\left \{ \gamma \mathfrak{s}^{\prime }(z)+\delta z\mathfrak{s}%
^{\prime \prime }(z)+\left( \frac{\delta -\gamma }{2}\right) z^{2}\mathfrak{s%
}^{\prime \prime \prime }\left( z\right) \right \} \\
& -\left \vert \gamma \mathfrak{t}^{\prime }(z)+\delta z\mathfrak{t}^{\prime
\prime }(z)+\left( \frac{\delta -\gamma }{2}\right) z^{2}\mathfrak{t}%
^{\prime \prime \prime }\left( z\right) \right \vert \\
& >\lambda \text{ \ \ \ \ }\left( z\in \mathcal{U}\right) .
\end{align*}%
Thus, $F_{\epsilon }\in \mathcal{R}(\gamma ,\delta ,\lambda )$ for each $%
\epsilon \left( \left \vert \epsilon \right \vert =1\right) .$ Conversely,
let $F_{\epsilon }=\mathfrak{s}+\epsilon \mathfrak{t}\in \mathcal{R}(\gamma
,\delta ,\lambda )$ then%
\begin{eqnarray*}
&&{\text{Re}}\left \{ \gamma \mathfrak{s}^{\prime }(z)+\delta z\mathfrak{s}%
^{\prime \prime }(z)+\left( \frac{\delta -\gamma }{2}\right) z^{2}\mathfrak{s%
}^{\prime \prime \prime }\left( z\right) \right \} \\
&>&{\text{Re}}\left \{ -\epsilon \left( \gamma \mathfrak{t}^{\prime
}(z)+\delta z\mathfrak{t}^{\prime \prime }(z)+\left( \frac{\delta -\gamma }{2%
}\right) z^{2}\mathfrak{t}^{\prime \prime \prime }\left( z\right) \right)
\right \} +\lambda \text{ }\left( z\in \mathcal{U}\right) .
\end{eqnarray*}%
With appropriate choice of $\epsilon \left( \left \vert \epsilon
\right
\vert =1\right) ,$ it follows that%
\begin{eqnarray*}
&&{\text{Re}}\left \{ \gamma \mathfrak{s}^{\prime }(z)+\delta z\mathfrak{s}%
^{\prime \prime }(z)+\left( \frac{\delta -\gamma }{2}\right) z^{2}\mathfrak{s%
}^{\prime \prime \prime }\left( z\right) -\lambda \right \} \\
&>&\left \vert \gamma \mathfrak{t}^{\prime }(z)+\delta z\mathfrak{t}^{\prime
\prime }(z)+\left( \frac{\delta -\gamma }{2}\right) z^{2}\mathfrak{t}%
^{\prime \prime \prime }\left( z\right) \right \vert \text{ }\left( z\in 
\mathcal{U}\right) ,
\end{eqnarray*}%
and hence $\mathfrak{f}\in \mathcal{R}_{H}^{0}(\gamma ,\delta ,\lambda ).$
\end{proof}

\begin{lemma}
(Jack-Miller-Mocanu Lemma \cite{Jack-Miller lemma, Miller}) Let $w$ defined
by $w(z)=c_{n}z^{n}+c_{n+1}z^{n+1}+...$ be analytic in $\mathcal{U},$ with $%
c_{n}\neq 0,$ and let $z_{0}\neq 0,~z_{0}=r_{0}e^{i\theta _{0}}(0<r_{0}<1)$
be a point of $\mathcal{U}$ such that 
\begin{equation*}
|w(z_{0})|=\max_{|z|\leq |z_{0}|}|w(z)|
\end{equation*}%
then there is a real number $k,$ $k\geq n\geq 1,$ such that 
\begin{equation*}
\frac{z_{0}w^{\prime }(z_{0})}{w(z_{0})}=k\text{ \ and \ }{\text{Re}}\left
\{ 1+\frac{z_{0}w^{\prime \prime }(z_{0})}{w^{\prime }(z_{0})}\right \} \geq
k.
\end{equation*}
\end{lemma}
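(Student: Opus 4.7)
The plan is to apply the elementary calculus fact that a twice-differentiable real function of $\theta$ has vanishing first derivative and non-positive second derivative at any interior local maximum, applied to $g(\theta) := |w(r_0 e^{i\theta})|^2$.

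First I would check $w(z_0) \ne 0$: by the maximum modulus principle, $|w|$ attains its maximum on the closed disk $\{|z|\le r_0\}$ only on the boundary (otherwise $w$ is constant, contradicting $c_n\ne 0$ with $n\ge 1$), and in particular $|w(z_0)|>0$, so the logarithmic derivative $zw'(z)/w(z)$ is well-defined near $z_0$. With $u(\theta)=w(r_0 e^{i\theta})$, the chain-rule identity $u'(\theta)=iz w'(z)$ gives $g'(\theta)=-2\,\text{Im}[z w'(z)\overline{w(z)}]$, and the condition $g'(\theta_0)=0$ then forces $z_0 w'(z_0)/w(z_0)$ to be real; name this number $k$.

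To prove $k\ge n$, I would transfer the argument to $h(z)=w(z)/z^n$, which is analytic at $0$ with $h(0)=c_n\ne 0$. Since $|h(z)|=|w(z)|/|z|^n$ is still maximized on $\{|z|\le r_0\}$ at the boundary point $z_0$, the radial derivative of $|h(re^{i\theta_0})|$ at $r=r_0$ is non-negative from the left, which yields $\text{Re}[z_0 h'(z_0)/h(z_0)]\ge 0$. The identity $zh'/h = zw'/w - n$ then reads exactly $k-n\ge 0$.

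For the second inequality, I would compute $g''(\theta_0)=2\,\text{Re}[u''(\theta_0)\overline{u(\theta_0)}]+2|u'(\theta_0)|^2$ using $u''(\theta)=-z w'(z)-z^2 w''(z)$. Dividing by $|w(z_0)|^2$ and using the two simplifications $|u'(\theta_0)|^2/|w(z_0)|^2=k^2$ and (because $k$ is real) $z_0^2 w''(z_0)/w(z_0)=k\cdot z_0 w''(z_0)/w'(z_0)$, the inequality $g''(\theta_0)\le 0$ reduces to $2k^2-2k-2k\,\text{Re}[z_0 w''(z_0)/w'(z_0)]\le 0$. Dividing by $2k>0$ (legitimate since $k\ge n\ge 1$) rearranges into $\text{Re}\{1+z_0 w''(z_0)/w'(z_0)\}\ge k$. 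The main delicate step is the second-derivative bookkeeping — tracking the $i$'s from $\tfrac{d}{d\theta}(r_0 e^{i\theta})=iz$ and invoking the reality of $k$ to convert $z_0^2 w''(z_0)/w(z_0)$ into the intended form; without that substitution the desired inequality is not visible.
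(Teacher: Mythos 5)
Your proposal is correct and complete. Note that the paper itself offers no proof of this statement: it is quoted as a known result with citations to Miller--Mocanu, so there is nothing internal to compare against. Your argument is essentially the classical proof of Jack's lemma --- first and second derivative tests for $\theta\mapsto|w(r_0e^{i\theta})|^2$ at the maximizing angle, plus the auxiliary function $h(z)=w(z)/z^n$ and the sign of its radial logarithmic derivative to get $k\ge n$. All the delicate points are handled: $w(z_0)\neq 0$ via the maximum principle, the reality of $k$ from $g'(\theta_0)=0$, and the implicit need for $w'(z_0)\neq 0$ (which follows from $k\ge n\ge 1$) before writing $z_0^2w''(z_0)/w(z_0)=k\,z_0w''(z_0)/w'(z_0)$ and dividing by $2k>0$.
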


\begin{lemma}
If $F\in \mathcal{R}(\gamma ,\delta ,\lambda )$ then ${\text{Re}}\{F^{\prime
}(z)\}>0,$ and hence $F$ is close-to-convex in $\mathcal{U}.$
\end{lemma}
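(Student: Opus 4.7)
The statement has two parts: first, the assertion that $\text{Re}\{F'(z)\} > 0$ on $\mathcal{U}$, and then the close-to-convexity of $F$, which is a classical consequence of the first part by the Noshiro--Warschawski theorem. My plan therefore focuses on establishing the real-part inequality, and I would apply the Jack-Miller-Mocanu lemma just stated to a Schwarz-type auxiliary function built from $F'$.

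\textbf{Setup.} I would set $w(z) := (F'(z) - 1)/(F'(z) + 1)$. Since $F'(0) = 1$, the function $w$ is analytic in a neighbourhood of $0$ with $w(0) = 0$, and $\text{Re}\{F'(z)\} > 0$ is equivalent to $|w(z)| < 1$. Suppose, for contradiction, the latter fails somewhere in $\mathcal{U}$. By a standard continuity argument there is a smallest $r_0 \in (0,1)$ such that $|w| < 1$ on $|z| < r_0$ and $|w(z_0)| = 1$ for some $z_0$ with $|z_0| = r_0$, so $|w(z_0)| = \max_{|z| \leq r_0} |w(z)|$. The Jack-Miller-Mocanu lemma then supplies a real $k \geq 1$ with $z_0 w'(z_0) = k w(z_0)$ and $\text{Re}(K) \geq k$, where $K := 1 + z_0 w''(z_0)/w'(z_0)$. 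Writing $w_0 := w(z_0) = e^{i\theta}$, I note $w_0 \neq 1$, since $w_0 = 1$ would force $F'(z_0) = \infty$.

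\textbf{The key computation.} Solving for $F' = (1+w)/(1-w)$ and differentiating twice, I would obtain
\begin{align*}
F'(z_0) &= \frac{1+w_0}{1-w_0}, \\
z_0 F''(z_0) &= \frac{2 k w_0}{(1-w_0)^2}, \\
z_0^2 F'''(z_0) &= \frac{2 k (K-1) w_0}{(1-w_0)^2} + \frac{4 k^2 w_0^2}{(1-w_0)^3}.
\end{align*}
Using $1 - w_0 = -2 i \sin(\theta/2) e^{i\theta/2}$, it follows that $(1+w_0)/(1-w_0)$ is purely imaginary, $w_0/(1-w_0)^2 = -1/(4 \sin^2(\theta/2))$ is real, and $\text{Re}\bigl(w_0^2/(1-w_0)^3\bigr) = 1/(8 \sin^2(\theta/2))$. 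Inserting these identities, the real part of the left-hand side of the inequality defining $\mathcal{R}(\gamma,\delta,\lambda)$, evaluated at $z_0$, simplifies to
\[
\frac{k}{4 \sin^2(\theta/2)} \Bigl[-2\delta + (\delta - \gamma)\bigl(k + 1 - \text{Re}(K)\bigr)\Bigr].
\]

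\textbf{Finishing and main obstacle.} Since $\delta - \gamma \geq 0$ and $\text{Re}(K) \geq k$, we have $k + 1 - \text{Re}(K) \leq 1$, so the bracket is at most $-2\delta + (\delta - \gamma) = -(\delta + \gamma) < 0$. Hence the displayed real part is strictly negative, contradicting the hypothesis that it exceeds $\lambda \geq 0$. This establishes $\text{Re}\{F'(z)\} > 0$ on $\mathcal{U}$, and close-to-convexity then follows from Noshiro--Warschawski. I expect the main obstacle to be the algebraic bookkeeping of re-expressing $F', zF'', z^2F'''$ in terms of $w_0, k, K$ and checking that the coefficient of $\text{Re}(K)$ in the bracket is $-(\delta - \gamma) \leq 0$; this sign is precisely what allows the Jack-Miller-Mocanu inequality $\text{Re}(K) \geq k$ to bound the bracket from above.
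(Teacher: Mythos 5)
Your proposal is correct and follows essentially the same route as the paper: the substitution $F'=(1+w)/(1-w)$, the Jack--Miller--Mocanu lemma at a maximal point of $|w|$, and the sign analysis using $\delta\geq\gamma$ and $\mathrm{Re}(K)\geq k$ to force the real part below $\lambda$. The only differences are notational (you write $1-\cos\theta=2\sin^2(\theta/2)$ and work with the unnormalized left-hand side rather than the paper's $\Psi$), and your identities for $F'$, $zF''$, $z^2F'''$ and the final bracket $-2\delta+(\delta-\gamma)(k+1-\mathrm{Re}(K))\leq-(\delta+\gamma)<0$ all check out.
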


\begin{proof}
Suppose $F\in \mathcal{R}(\gamma ,\delta ,\lambda )$ and $\frac{2\gamma
F^{\prime }(z)+2\delta zF^{\prime \prime }(z)+\left( \delta -\gamma \right)
z^{2}F^{\prime \prime \prime }(z)-2\lambda }{2\left( \gamma -\lambda \right) 
}=:\Psi (z).$ Then ${\text{Re}}\{ \Psi (z)\}>0$ for $z\in \mathcal{U}.$
Consider an analytic function $w$ in $\mathcal{U}$ with $w(0)=0$ and%
\begin{equation*}
F^{\prime }(z)=\frac{1+w(z)}{1-w(z)},~\ w(z)\neq 1.
\end{equation*}%
We need to prove that $|w(z)|<1$ for all $z\in \mathcal{U}.$ Then we have%
\begin{eqnarray*}
\Psi (z) &=&\frac{2\gamma F^{\prime }(z)+2\delta zF^{\prime \prime
}(z)+\left( \delta -\gamma \right) z^{2}F^{\prime \prime \prime
}(z)-2\lambda }{2\left( \gamma -\lambda \right) } \\
&=&\frac{\gamma }{\gamma -\lambda }\frac{1+w(z)}{1-w(z)}+\frac{2\delta }{%
\gamma -\lambda }\frac{zw^{\prime }(z)}{(1-w(z))^{2}} \\
&&+\frac{\delta -\gamma }{\gamma -\lambda }\frac{z^{2}\left[ w^{\prime
\prime }\left( z\right) \left( 1-w(z)\right) +2\left( w^{\prime }\left(
z\right) \right) ^{2}\right] }{(1-w(z))^{3}}-\frac{\lambda }{\gamma -\lambda 
} \\
&=&\frac{1}{\gamma -\lambda }\left( \gamma \frac{1+w(z)}{1-w(z)}+2\delta 
\frac{zw^{\prime }(z)}{(1-w(z))^{2}}\right. \\
&&\left. +\left( \delta -\gamma \right) \frac{zw^{\prime }(z)}{(1-w(z))^{2}}%
\frac{zw^{\prime \prime }\left( z\right) }{w^{\prime }(z)}+2\left( \delta
-\gamma \right) \frac{\left( zw^{\prime }(z)\right) ^{2}}{(1-w(z))^{3}}%
-\lambda \right) .
\end{eqnarray*}%
Since $w$ is analytic in $\mathcal{U}$ and $w(0)=0,$ if there are $z_{0}\in 
\mathcal{U}$ such that 
\begin{equation*}
\max_{|z|\leq |z_{0}|}|w(z)|=|w(z_{0})|=1,
\end{equation*}%
then by Lemma 3, we can write 
\begin{equation*}
w(z_{0})=e^{i\theta },\quad z_{0}w^{\prime }(z_{0})=kw(z_{0})=ke^{i\theta
},\quad (k\geq 1,\text{ }0<\theta <2\pi ).
\end{equation*}%
and%
\begin{equation*}
\text{Re}\left \{ \frac{z_{0}w^{\prime \prime }(z_{0})}{w^{\prime }(z_{0})}%
\right \} \geq k-1.
\end{equation*}%
For such a point $z_{0}\in \mathcal{U}$, we obtain%
\begin{eqnarray*}
{\text{Re}}\{ \Psi (z_{0})\} &=&\frac{1}{\gamma -\lambda }\text{Re}\left(
\gamma \frac{1+w(z_{0})}{1-w(z_{0})}+2\delta \frac{z_{0}w^{\prime }(z_{0})}{%
(1-w(z_{0}))^{2}}\right. \\
&&\left. +\left( \delta -\gamma \right) \frac{z_{0}w^{\prime }(z_{0})}{%
(1-w(z_{0}))^{2}}\frac{z_{0}w^{\prime \prime }\left( z_{0}\right) }{%
w^{\prime }(z_{0})}+2\left( \delta -\gamma \right) \frac{\left(
z_{0}w^{\prime }(z_{0})\right) ^{2}}{(1-w(z_{0}))^{3}}-\lambda \right) \\
&=&\frac{1}{\gamma -\lambda }\left[ -\frac{\delta k}{1-\cos \theta }-\frac{%
\left( \delta -\gamma \right) k}{2\left( 1-\cos \theta \right) }\text{Re}%
\left \{ \frac{zw^{\prime \prime }\left( z_{0}\right) }{w^{\prime }(z_{0})}%
\right \} +\frac{\left( \delta -\gamma \right) k^{2}}{2\left( 1-\cos \theta
\right) }-\lambda \right] \\
&\leq &\frac{1}{\gamma -\lambda }\left[ -\frac{\delta k}{1-\cos \theta }+%
\frac{\left( \delta -\gamma \right) k}{2\left( 1-\cos \theta \right) }\left(
1-k\right) +\frac{\left( \delta -\gamma \right) k^{2}}{2\left( 1-\cos \theta
\right) }-\lambda \right] \\
&=&-\frac{1}{\gamma -\lambda }\left[ \frac{\left( \delta +\gamma \right) k}{%
2\left( 1-\cos \theta \right) }+\lambda \right] <0,
\end{eqnarray*}%
which contradicts our assumption. Hence, there is no $z_{0}\in \mathcal{U}$
such that $|w(z_{0})|=1,$ which means that $|w(z)|<1$ for all $z\in \mathcal{%
U}.$ Therefore, we obtain that ${\text{Re}}\{F^{\prime }(z)\}>0.$
\end{proof}

\begin{theorem}
The functions in the class $\mathcal{R}_{H}^{0}(\gamma ,\delta ,\lambda )$
are close-to-convex in $\mathcal{U}.$
\end{theorem}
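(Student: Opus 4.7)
The plan is to combine the three preceding results (Lemma 1, Theorem 2, and Lemma 4) in a direct chain, so no new technical machinery is needed.

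First, I would take an arbitrary $\mathfrak{f}=\mathfrak{s}+\overline{\mathfrak{t}} \in \mathcal{R}_{H}^{0}(\gamma,\delta,\lambda)$. Since $\mathfrak{f} \in \mathcal{H}^{0}$, the expansions in (\ref{eqH}) give $\mathfrak{s}'(0)=1$ and $\mathfrak{t}'(0)=0$, so the hypothesis $|\mathfrak{t}'(0)| < |\mathfrak{s}'(0)|$ required by Lemma 1 is satisfied automatically.

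Next, I would invoke Theorem 2 to convert the harmonic membership into a family of analytic memberships: for every $\epsilon$ with $|\epsilon|=1$, the analytic function $F_{\epsilon} = \mathfrak{s} + \epsilon \mathfrak{t}$ lies in $\mathcal{R}(\gamma,\delta,\lambda)$. Then Lemma 4 applies to each such $F_{\epsilon}$ and yields $\mathrm{Re}\{F_{\epsilon}'(z)\} > 0$ in $\mathcal{U}$, which by Noshiro–Warschawski implies that $F_{\epsilon}$ is (analytically) close-to-convex in $\mathcal{U}$ for every unimodular $\epsilon$.

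Finally, with both hypotheses of Lemma 1 now verified — the derivative condition at the origin and close-to-convexity of $F_{\epsilon}$ for all $|\epsilon|=1$ — I would conclude that $\mathfrak{f} = \mathfrak{s} + \overline{\mathfrak{t}}$ is close-to-convex in $\mathcal{U}$, completing the proof. I do not anticipate a genuine obstacle here: all the substantive work (the Jack–Miller–Mocanu argument for $\mathrm{Re}\,F_{\epsilon}'>0$, and the equivalence between the harmonic inequality and the family of analytic inequalities) has already been carried out in Lemma 4 and Theorem 2 respectively, so this theorem functions as a corollary that simply assembles those pieces.
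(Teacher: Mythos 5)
Your proposal is correct and follows exactly the same route as the paper: apply Theorem 2 to pass to the analytic functions $F_{\epsilon}$, use Lemma 4 (whose conclusion already includes close-to-convexity via $\mathrm{Re}\{F_{\epsilon}'\}>0$) for each $|\epsilon|=1$, and then invoke Lemma 1. Your version is slightly more careful in explicitly checking the hypothesis $|\mathfrak{t}'(0)|<|\mathfrak{s}'(0)|$, but the argument is identical in substance.
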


\begin{proof}
Referring to Lemma 4, we derive that functions $F_{\epsilon }=\mathfrak{s}%
+\epsilon \mathfrak{t}\in \mathcal{R}(\gamma ,\delta ,\lambda )$ are
close-to-convex in $\mathcal{U}$ for each $\epsilon (|\epsilon |=1).$ Now in
view of Lemma 1 and Theorem 2, we obtain that functions in $\mathcal{R}%
_{H}^{0}(\gamma ,\delta ,\lambda )$ are close-to-convex in $\mathcal{U}.$
\end{proof}

\begin{theorem}
Let $\mathfrak{f}=\mathfrak{s}+\overline{\mathfrak{t}}\in \mathcal{R}%
_{H}^{0}(\gamma ,\delta ,\lambda )$ then for $m\geq 2,$%
\begin{equation}
\left \vert b_{m}\right \vert \leq \frac{2\left( \gamma -\lambda \right) }{%
m^{2}\left[ 2\gamma +\left( \delta -\gamma \right) (m-1)\right] }.
\label{boundbn}
\end{equation}%
The result is sharp and equality holds for the function $\mathfrak{f}(z)=z+%
\frac{2\left( \gamma -\lambda \right) }{m^{2}\left[ 2\gamma +\left( \delta
-\gamma \right) (m-1)\right] }\bar{z}^{m}.$
\end{theorem}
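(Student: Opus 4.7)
The plan is to extract $b_m$ via Cauchy's coefficient formula and control it pointwise using the defining inequality (\ref{BH}). With $\mathfrak{t}(z) = \sum_{m=2}^{\infty} b_m z^m$, a routine expansion gives
\[
g(z) := \gamma \mathfrak{t}^{\prime}(z) + \delta z \mathfrak{t}^{\prime\prime}(z) + \left(\frac{\delta-\gamma}{2}\right) z^2 \mathfrak{t}^{\prime\prime\prime}(z) = \sum_{m=2}^{\infty} A_m b_m\, z^{m-1},
\]
where $A_m = \gamma m + \delta m(m-1) + \frac{\delta-\gamma}{2} m(m-1)(m-2)$. A short algebraic simplification, which I would verify by direct expansion, yields the key identity $A_m = \frac{m^2[2\gamma+(\delta-\gamma)(m-1)]}{2}$, exactly half the denominator in the claimed bound.

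Next I would apply Cauchy's integral formula to isolate the coefficient of $z^{m-1}$ in $g$: for any $0<r<1$,
\[
|A_m b_m|\, r^{m-1} = \left|\frac{1}{2\pi}\int_0^{2\pi} g(re^{i\theta})\, e^{-i(m-1)\theta}\, d\theta\right| \leq \frac{1}{2\pi}\int_0^{2\pi} |g(re^{i\theta})|\, d\theta.
\]
Writing $h(z) := \gamma \mathfrak{s}^{\prime}(z) + \delta z \mathfrak{s}^{\prime\prime}(z) + \left(\frac{\delta-\gamma}{2}\right) z^2 \mathfrak{s}^{\prime\prime\prime}(z)$, the hypothesis (\ref{BH}) reads $|g(z)| < {\text{Re}}[h(z) - \lambda]$ on $\mathcal{U}$. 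Since $h-\lambda$ is analytic with $(h-\lambda)(0) = \gamma-\lambda$, the mean value property produces
\[
\frac{1}{2\pi}\int_0^{2\pi} {\text{Re}}[h(re^{i\theta}) - \lambda]\, d\theta = \gamma - \lambda.
\]
Combining the two displays gives $|A_m b_m|\, r^{m-1} \leq \gamma - \lambda$ for every $r<1$, and letting $r \to 1^-$ delivers the desired bound $|b_m| \leq (\gamma-\lambda)/A_m = \frac{2(\gamma-\lambda)}{m^2[2\gamma+(\delta-\gamma)(m-1)]}$.

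For sharpness, I would substitute the candidate $\mathfrak{f}(z) = z + \frac{2(\gamma-\lambda)}{m^2[2\gamma+(\delta-\gamma)(m-1)]}\,\overline{z}^m$ into (\ref{BH}): the left-hand side collapses to $\gamma-\lambda$ since $\mathfrak{s}(z)=z$, while the right-hand side equals $(\gamma-\lambda)|z|^{m-1}$, strictly less than $\gamma-\lambda$ on $\mathcal{U}$. This confirms both membership in $\mathcal{R}_H^0(\gamma,\delta,\lambda)$ and equality in the coefficient estimate.

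I do not anticipate a substantive obstacle; the only mildly delicate point is the algebraic identity $A_m = m^2[2\gamma+(\delta-\gamma)(m-1)]/2$, and the conceptual core is simply the pairing of Cauchy's formula for $g$ with the mean value property for ${\text{Re}}(h-\lambda)$, which converts the pointwise majorization $|g| < {\text{Re}}(h-\lambda)$ into a per-coefficient bound without losing the factor of two that a Carath\'eodory-type argument on a positive real-part function would incur.
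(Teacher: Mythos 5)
Your proposal is correct and follows essentially the same route as the paper: extract the coefficient of $z^{m-1}$ from the dilatation-side operator via the Cauchy/mean-value integral, majorize $|g|$ by ${\text{Re}}(h-\lambda)$ using the defining inequality, evaluate the resulting integral as $\gamma-\lambda$, and let $r\to 1^{-}$; the sharpness check is likewise identical. The algebraic identity $A_m=m^{2}\left[2\gamma+(\delta-\gamma)(m-1)\right]/2$ does hold, so there is no gap.
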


\begin{proof}
Suppose that $\mathfrak{f}=\mathfrak{s}+\overline{\mathfrak{t}}\in \mathcal{R%
}_{H}^{0}(\gamma ,\delta ,\lambda ).$ Using the series representation of $%
\mathfrak{t}(z),$ we derive%
\begin{eqnarray*}
&&r^{m-1}m^{2}\left[ \gamma +\frac{\delta -\gamma }{2}(m-1)\right] \left
\vert b_{m}\right \vert \\
&\leq &\frac{1}{2\pi }\int \limits_{0}^{2\pi }\left \vert \gamma \mathfrak{t}%
^{\prime }(re^{i\theta })+\delta re^{i\theta }\mathfrak{t}^{\prime \prime
}(re^{i\theta })+\left( \frac{\delta -\gamma }{2}\right) r^{2}e^{2i\theta }%
\mathfrak{t}^{\prime \prime \prime }(re^{i\theta })\right \vert d\theta \\
&<&\frac{1}{2\pi }\int \limits_{0}^{2\pi }{\text{Re}}\left \{ \gamma 
\mathfrak{s}^{\prime }(re^{i\theta })+\delta re^{i\theta }\mathfrak{s}%
^{\prime \prime }(re^{i\theta })+\left( \frac{\delta -\gamma }{2}\right)
r^{2}e^{2i\theta }\mathfrak{s}^{\prime \prime \prime }(re^{i\theta
})-\lambda \right \} d\theta \\
&=&\frac{1}{2\pi }\int \limits_{0}^{2\pi }{\text{Re}}\left \{ \gamma
-\lambda +\sum \limits_{m=2}^{\infty }m^{2}\left[ \gamma +\frac{\delta
-\gamma }{2}(m-1)\right] a_{m}r^{m-1}e^{i(m-1)\theta }\right \} d\theta \\
&=&\gamma -\lambda .
\end{eqnarray*}%
Allowing $r\rightarrow 1^{-}$ gives the desired bound. Moreover, it is easy
to verify that the equality holds for the function $\mathfrak{f}(z)=z+\frac{%
2\left( \gamma -\lambda \right) }{m^{2}\left[ 2\gamma +\left( \delta -\gamma
\right) (m-1)\right] }\bar{z}^{m}.$
\end{proof}

\begin{theorem}
Let $\mathfrak{f}=\mathfrak{s}+\overline{\mathfrak{t}}\in \mathcal{R}%
_{H}^{0}(\gamma ,\delta ,\lambda )$. Then for $m\geq 2,$ we have%
\begin{align*}
\text{ \ \ \ \ \ \ \ }(i)\text{ }\left \vert a_{m}\right \vert +\left \vert
b_{m}\right \vert & \leq \frac{4\left( \gamma -\lambda \right) }{m^{2}\left[
2\gamma +\left( \delta -\gamma \right) (m-1)\right] }, \\
\text{ \ \ \ \ \ \ }(ii)\text{ }\left \vert \left \vert a_{m}\right \vert
-\left \vert b_{m}\right \vert \right \vert & \leq \frac{4\left( \gamma
-\lambda \right) }{m^{2}\left[ 2\gamma +\left( \delta -\gamma \right) (m-1)%
\right] }, \\
(iii)\text{ }\left \vert a_{m}\right \vert & \leq \frac{4\left( \gamma
-\lambda \right) }{m^{2}\left[ 2\gamma +\left( \delta -\gamma \right) (m-1)%
\right] }.
\end{align*}%
All these results are sharp and all equalities hold for the function $%
\mathfrak{f}(z)=z+\sum \limits_{m=2}^{\infty }\frac{4\left( \gamma -\lambda
\right) }{m^{2}\left[ 2\gamma +\left( \delta -\gamma \right) (m-1)\right] }%
z^{m}.$
\end{theorem}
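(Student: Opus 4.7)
The plan is to bootstrap from Theorem 2: since $\mathfrak{f}=\mathfrak{s}+\overline{\mathfrak{t}}\in\mathcal{R}_{H}^{0}(\gamma,\delta,\lambda)$ is equivalent to $F_{\epsilon}=\mathfrak{s}+\epsilon\mathfrak{t}\in\mathcal{R}(\gamma,\delta,\lambda)$ for every $|\epsilon|=1$, I would first establish a sharp bound on the $m$-th Taylor coefficient of an arbitrary analytic $F\in\mathcal{R}(\gamma,\delta,\lambda)$, and then apply this bound to $F_{\epsilon}$, whose $m$-th coefficient is the complex number $a_{m}+\epsilon b_{m}$. Choosing $\epsilon$ to rotate $b_{m}$ into phase (resp.\ anti-phase) with $a_{m}$ then delivers (i) and (ii); part (iii) will follow trivially from (i) since $|a_{m}|\le|a_{m}|+|b_{m}|$.

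For the analytic bound, the idea is the standard Carath\'eodory technique. Write $F(z)=z+\sum_{m\ge 2}A_{m}z^{m}$ and set
\[
p(z)\;=\;\frac{\gamma F^{\prime}(z)+\delta zF^{\prime\prime}(z)+\frac{\delta-\gamma}{2}z^{2}F^{\prime\prime\prime}(z)-\lambda}{\gamma-\lambda}.
\]
Then $p(0)=1$ and $\operatorname{Re}p>0$ on $\mathcal{U}$, so $p(z)=1+\sum_{m\ge 1}p_{m}z^{m}$ with $|p_{m}|\le 2$. A direct series comparison shows that the coefficient of $z^{m-1}$ on the left equals
\[
\Bigl[\gamma m+\delta m(m-1)+\tfrac{\delta-\gamma}{2}m(m-1)(m-2)\Bigr]A_{m}=\tfrac{m^{2}\left[2\gamma+(\delta-\gamma)(m-1)\right]}{2}A_{m},
\]
so matching coefficients gives $|A_{m}|\le\dfrac{4(\gamma-\lambda)}{m^{2}\left[2\gamma+(\delta-\gamma)(m-1)\right]}$. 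Applied to $F_{\epsilon}$, this yields the uniform bound
\[
|a_{m}+\epsilon b_{m}|\;\le\;\frac{4(\gamma-\lambda)}{m^{2}\left[2\gamma+(\delta-\gamma)(m-1)\right]}\qquad(|\epsilon|=1).
\]

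To extract (i), pick $\epsilon=e^{i(\arg a_{m}-\arg b_{m})}$ so that $a_{m}$ and $\epsilon b_{m}$ point in the same direction, giving $|a_{m}+\epsilon b_{m}|=|a_{m}|+|b_{m}|$. To extract (ii), rotate by an additional $\pi$ so the two vectors cancel, giving $|a_{m}+\epsilon b_{m}|=||a_{m}|-|b_{m}||$. Part (iii) is immediate. For sharpness, take the analytic extremal $\mathfrak{f}(z)=z+\sum_{m\ge 2}\frac{4(\gamma-\lambda)}{m^{2}[2\gamma+(\delta-\gamma)(m-1)]}z^{m}$ (so $\mathfrak{t}\equiv 0$); a short computation collapses the defining expression to $(\gamma-\lambda)\frac{1+z}{1-z}+\lambda$, whose real part exceeds $\lambda$ on $\mathcal{U}$, confirming membership in $\mathcal{R}_{H}^{0}(\gamma,\delta,\lambda)$ and attainment of equality in all three inequalities.

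The only delicate step is the coefficient identification in the analytic bound, which hinges on the algebraic simplification $\gamma+\delta(m-1)+\tfrac{\delta-\gamma}{2}(m-1)(m-2)=\tfrac{m[2\gamma+(\delta-\gamma)(m-1)]}{2}$; once this is in hand, the rest of the argument is essentially bookkeeping, so I do not expect any substantial obstacle.
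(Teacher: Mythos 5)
Your proposal is correct and follows essentially the same route as the paper: both reduce to $F_{\epsilon}=\mathfrak{s}+\epsilon\mathfrak{t}\in\mathcal{R}(\gamma,\delta,\lambda)$ via Theorem 2, write the defining quantity as $\lambda+(\gamma-\lambda)p(z)$ with $\operatorname{Re}p>0$, match the coefficient of $z^{m-1}$ (your simplification to $\tfrac{m^{2}[2\gamma+(\delta-\gamma)(m-1)]}{2}$ checks out), invoke $|p_{m-1}|\le 2$, and exploit the arbitrariness of $\epsilon$ to obtain (i), with (ii) and (iii) as consequences and the same extremal function for sharpness.
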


\begin{proof}
$(i)$ Suppose that $\mathfrak{f}=\mathfrak{s}+\overline{\mathfrak{t}}\in 
\mathcal{R}_{H}^{0}(\gamma ,\delta ,\lambda ),$ then from Theorem 2, $%
F_{\epsilon }=\mathfrak{s}+\epsilon \mathfrak{t}\in \mathcal{R}(\gamma
,\delta ,\lambda )\ $for $\epsilon $ $\left( \left \vert \epsilon
\right
\vert =1\right) .$ Thus for each $\left \vert \epsilon \right \vert
=1,$ we have 
\begin{equation*}
\text{Re}\left \{ \gamma (\mathfrak{s}+\epsilon \mathfrak{t})^{\prime
}+\delta z(\mathfrak{s}+\epsilon \mathfrak{t})^{\prime \prime }+\left( \frac{%
\delta -\gamma }{2}\right) z^{2}(\mathfrak{s}+\epsilon \mathfrak{t})^{\prime
\prime \prime }\right \} >\lambda
\end{equation*}%
for $z\in \mathcal{U}.$ This implies that there exists an analytic function $%
p$ of the form $p(z)=1+\underset{m=1}{\overset{\infty }{\sum }}p_{m}z^{m},$
with Re$\left[ p(z)\right] >0$ in $\mathcal{U}$ such that 
\begin{align}
& \gamma \mathfrak{s}^{\prime }(z)+\delta z\mathfrak{s}^{\prime \prime
}(z)+\left( \frac{\delta -\gamma }{2}\right) z^{2}\mathfrak{s}^{\prime
\prime \prime }\left( z\right) +\epsilon \left( \gamma \mathfrak{t}^{\prime
}(z)+\delta z\mathfrak{t}^{\prime \prime }(z)+\left( \frac{\delta -\gamma }{2%
}\right) z^{2}\mathfrak{t}^{\prime \prime \prime }\left( z\right) \right) 
\notag \\
& =\lambda +\left( \gamma -\lambda \right) p(z).  \label{eqp}
\end{align}%
Comparing coefficients on both sides of (\ref{eqp}) we have%
\begin{equation*}
m^{2}\left[ \gamma +\frac{\delta -\gamma }{2}(m-1)\right] (a_{m}+\epsilon
b_{m})=\left( \gamma -\lambda \right) p_{m-1}\text{ for }m\geq 2.
\end{equation*}%
Since $\left \vert p_{m}\right \vert \leq 2$ for $m\geq 1,$ and $\epsilon
\left( \left \vert \epsilon \right \vert =1\right) $ is arbitrary, proof of
(i) is complete. Proofs of (ii) and (iii) follows from (i). The function $%
\mathfrak{f}(z)=z+$ $\sum \limits_{m=2}^{\infty }\frac{4\left( \gamma
-\lambda \right) }{m^{2}\left[ 2\gamma +\left( \delta -\gamma \right) (m-1)%
\right] }z^{m}$, shows that all inequalities are sharp.
\end{proof}

The following result gives a sufficient condition for a function to be in
the class $\mathcal{R}_{H}^{0}(\gamma ,\delta ,\lambda ).$

\begin{theorem}
Let $\mathfrak{f}=\mathfrak{s}+\overline{\mathfrak{t}}\in \mathcal{H}^{0}$
with

\begin{equation}
\sum \limits_{m=2}^{\infty }m^{2}\left[ 2\gamma +\left( \delta -\gamma
\right) (m-1)\right] \left( \left \vert a_{m}\right \vert +\left \vert
b_{m}\right \vert \right) \leq 2\left( \gamma -\lambda \right) ,
\label{coeffcond}
\end{equation}%
then $\mathfrak{f}\in \mathcal{R}_{H}^{0}(\gamma ,\delta ,\lambda ).$
\end{theorem}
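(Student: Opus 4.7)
The plan is to verify the defining inequality of $\mathcal{R}_H^0(\gamma,\delta,\lambda)$ directly from the power series of $\mathfrak{s}$ and $\mathfrak{t}$. First I would substitute $\mathfrak{s}(z)=z+\sum_{m\geq 2}a_m z^m$ and $\mathfrak{t}(z)=\sum_{m\geq 2}b_m z^m$ into the two differential operators appearing in (\ref{BH}); using the same simplification already employed in the proof of Theorem 6, the coefficient of $z^{m-1}$ in $\gamma\mathfrak{s}'(z)+\delta z\mathfrak{s}''(z)+\tfrac{\delta-\gamma}{2}z^2\mathfrak{s}'''(z)$ collapses to $\tfrac{m^2}{2}[2\gamma+(\delta-\gamma)(m-1)]\,a_m$, and an identical formula with $b_m$ in place of $a_m$ applies to the $\mathfrak{t}$-operator. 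The constant term on the $\mathfrak{s}$-side is $\gamma$, while the $\mathfrak{t}$-side has no constant term.

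Next, I would apply the elementary estimates $\mathrm{Re}\,\varphi(z)\geq\mathrm{Re}\,c_0-\sum_{k\geq 1}|c_k||z|^k$ and $|\psi(z)|\leq\sum_{k\geq 1}|d_k||z|^k$ (the latter for $\psi$ with vanishing constant term) to the two series obtained in the previous step. This reduces the target inequality to
\[
\gamma-\lambda>\sum_{m=2}^{\infty}\tfrac{m^2}{2}\bigl[2\gamma+(\delta-\gamma)(m-1)\bigr]\bigl(|a_m|+|b_m|\bigr)|z|^{m-1}.
\]

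Finally, since $z\in\mathcal{U}$ we have $|z|^{m-1}\leq|z|<1$ for every $m\geq 2$, so the right-hand side is strictly majorized by $\sum_{m=2}^{\infty}\tfrac{m^2}{2}[2\gamma+(\delta-\gamma)(m-1)](|a_m|+|b_m|)$, which by hypothesis (\ref{coeffcond}) is at most $\gamma-\lambda$. The degenerate case $z=0$ is immediate, since then the right-hand side vanishes while $\gamma-\lambda>0$. I do not anticipate any substantial obstacle; the only delicate point is preserving a \emph{strict} inequality, which is why one keeps the factor $|z|^{m-1}$ in place until the last step and invokes $|z|<1$ there. The sole algebraic computation is the identification $m\gamma+\delta m(m-1)+\tfrac{\delta-\gamma}{2}m(m-1)(m-2)=\tfrac{m^2}{2}[2\gamma+(\delta-\gamma)(m-1)]$, which is routine and matches the coefficient already appearing in the previous proofs.
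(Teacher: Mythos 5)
Your proposal is correct and follows essentially the same route as the paper: expand both differential operators into power series (with the same coefficient $m^{2}\bigl[\gamma+\tfrac{\delta-\gamma}{2}(m-1)\bigr]$), bound the real part from below and the modulus from above by the corresponding absolute-value series, and use $|z|<1$ together with (\ref{coeffcond}) to close the strict inequality. No substantive difference from the paper's argument.
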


\begin{proof}
Suppose that $\mathfrak{f}=\mathfrak{s}+\overline{\mathfrak{t}}\in \mathcal{H%
}^{0}.$ Then using (\ref{coeffcond}),

\begin{eqnarray*}
&&{\text{Re}}\left \{ \gamma \mathfrak{s}^{\prime }(z)+\delta z\mathfrak{s}%
^{\prime \prime }(z)+\left( \frac{\delta -\gamma }{2}\right) z^{2}\mathfrak{s%
}^{\prime \prime \prime }\left( z\right) -\lambda \right \} \\
&=&{\text{Re}}\left \{ \gamma -\lambda +\sum \limits_{m=2}^{\infty }m^{2}%
\left[ \gamma +\frac{\delta -\gamma }{2}(m-1)\right] a_{m}z^{m-1}\right \} \\
&>&\gamma -\lambda -\sum \limits_{m=2}^{\infty }m^{2}\left[ \gamma +\frac{%
\delta -\gamma }{2}(m-1)\right] \left \vert a_{m}\right \vert \\
&\geq &\sum \limits_{m=2}^{\infty }m^{2}\left[ \gamma +\frac{\delta -\gamma 
}{2}(m-1)\right] \left \vert b_{m}\right \vert \\
&>&\left \vert \sum \limits_{m=2}^{\infty }m^{2}\left[ \gamma +\frac{\delta
-\gamma }{2}(m-1)\right] b_{m}z^{m-1}\right \vert \\
&=&\left \vert \gamma \mathfrak{t}^{\prime }(z)+\delta z\mathfrak{t}^{\prime
\prime }(z)+\left( \frac{\delta -\gamma }{2}\right) z^{2}\mathfrak{t}%
^{\prime \prime \prime }\left( z\right) \right \vert .
\end{eqnarray*}%
Hence, $\mathfrak{f}\in \mathcal{R}_{H}^{0}(\gamma ,\delta ,\lambda ).$
\end{proof}

\begin{corollary}
Let $\mathfrak{f}=\mathfrak{s}+\overline{\mathfrak{t}}\in \mathcal{H}^{0}$
satisfies the inequality (\ref{coeffcond}), then f is stable harmonic
close-to-convex in $\mathcal{U}$.
\end{corollary}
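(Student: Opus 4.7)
The plan is to chain together three results that are already in place in the excerpt, so the corollary reduces to a short bookkeeping exercise rather than any fresh analysis. First I would invoke Theorem 8: the hypothesis is precisely the coefficient inequality (\ref{coeffcond}), so that theorem immediately gives $\mathfrak{f} \in \mathcal{R}_H^{0}(\gamma,\delta,\lambda)$.

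Next I would apply Theorem 2, which characterizes membership in $\mathcal{R}_H^{0}(\gamma,\delta,\lambda)$ as the statement that $F_{\epsilon} = \mathfrak{s} + \epsilon \mathfrak{t} \in \mathcal{R}(\gamma,\delta,\lambda)$ for every $\epsilon$ with $|\epsilon| = 1$. Combined with Step 1 this gives, in one line, that each $F_{\epsilon}$ lies in the analytic class $\mathcal{R}(\gamma,\delta,\lambda)$.

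Finally, Lemma 4 tells us that every member of $\mathcal{R}(\gamma,\delta,\lambda)$ has $\mathrm{Re}\{F^{\prime}(z)\} > 0$ in $\mathcal{U}$, and hence is close-to-convex. Applying this to $F_{\epsilon}$ for each $|\epsilon| = 1$ yields that $F_{\epsilon} = \mathfrak{s} + \epsilon \mathfrak{t}$ is close-to-convex in $\mathcal{U}$ for every unimodular $\epsilon$. By the very definition of stable harmonic close-to-convexity recalled in the introduction (the condition $F_{\epsilon} = \mathfrak{s} + \epsilon \mathfrak{t}$ close-to-convex for all $|\epsilon| = 1$), this is exactly the conclusion.

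The only subtlety I foresee is making sure the corollary actually asserts \emph{stable} close-to-convexity rather than merely close-to-convexity of $\mathfrak{f}$ itself; this is what forces the use of Theorem 2 (to obtain all the $F_{\epsilon}$) rather than jumping straight from Theorem 8 to Theorem 5 via Lemma 1. Beyond that there is no genuine obstacle: since the coefficient hypothesis is symmetric under $b_m \mapsto \epsilon b_m$, the argument is uniform in $\epsilon$, and no further estimates are required.
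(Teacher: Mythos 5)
Your proposal is correct and is exactly the argument the paper intends (the corollary is stated without proof, but the chain Theorem 8 $\Rightarrow$ Theorem 2 $\Rightarrow$ Lemma 4, combined with the Hernandez--Martin characterization of stability via the analytic shears $F_{\epsilon}=\mathfrak{s}+\epsilon\mathfrak{t}$ recalled in the introduction, is the evident route). Your closing observation that the coefficient condition (\ref{coeffcond}) is invariant under $b_m\mapsto\epsilon b_m$ even gives a second, equally valid path: each $\mathfrak{f}_{\epsilon}=\mathfrak{s}+\epsilon\overline{\mathfrak{t}}$ itself satisfies the hypothesis of Theorem 8 and is therefore close-to-convex by Theorem 5, which is the definition of stable harmonic close-to-convexity directly.
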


\begin{theorem}
\label{theodist} Let $\mathfrak{f}=\mathfrak{s}+\overline{\mathfrak{t}}$ $%
\in \mathcal{R}_{H}^{0}(\gamma ,\delta ,\lambda ).$ Then%
\begin{equation*}
\left \vert z\right \vert +4\left( \gamma -\lambda \right) \sum
\limits_{m=2}^{\infty }\frac{(-1)^{m-1}\left \vert z\right \vert ^{m}}{m^{2}%
\left[ 2\gamma +\left( \delta -\gamma \right) (m-1)\right] }\leq \left \vert 
\mathfrak{f}(z)\right \vert ,
\end{equation*}%
\begin{equation*}
\left \vert \mathfrak{f}(z)\right \vert \leq \left \vert z\right \vert
+4\left( \gamma -\lambda \right) \sum \limits_{m=2}^{\infty }\frac{\left
\vert z\right \vert ^{m}}{m^{2}\left[ 2\gamma +\left( \delta -\gamma \right)
(m-1)\right] }.
\end{equation*}%
Inequalities are sharp for the function $f(z)=z+\sum \limits_{m=2}^{\infty }%
\frac{4\left( \gamma -\lambda \right) }{m^{2}\left[ 2\gamma +\left( \delta
-\gamma \right) (m-1)\right] }\overline{z}^{m}.$
\end{theorem}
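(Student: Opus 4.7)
The upper bound is a direct consequence of Theorem 7(i). Writing
\[
c_m:=\frac{4(\gamma-\lambda)}{m^{2}\bigl[2\gamma+(\delta-\gamma)(m-1)\bigr]},\qquad m\ge 2,
\]
that theorem supplies $|a_m|+|b_m|\le c_m$, so the triangle inequality gives
\[
|\mathfrak{f}(z)|\le |z|+\sum_{m=2}^{\infty}\bigl(|a_m|+|b_m|\bigr)|z|^{m}\le |z|+\sum_{m=2}^{\infty} c_m |z|^{m},
\]
which is the asserted upper bound. Sharpness at $z=r>0$ is immediate for $\mathfrak{f}(z)=z+\sum_{m\ge 2} c_m \bar z^m$: there $\bar z^m=r^m$, so both triangle estimates collapse to equalities.

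For the lower bound the plan is to exploit the close-to-convexity from Theorem 5, which forces univalence of $\mathfrak{f}$, so the image of the disk $|z|<r$ is a Jordan domain containing $0$ in its interior. Fix $r\in(0,1)$ and pick $z_0$ with $|z_0|=r$ minimizing $|\mathfrak{f}|$ on the circle $|z|=r$; then $\mathfrak{f}(z_0)$ is the nearest boundary point to $0$, so the segment $\sigma=[0,\mathfrak{f}(z_0)]$ lies in the closure of the image. I would pull $\sigma$ back through $\mathfrak{f}$ to a curve $\gamma$ from $0$ to $z_0$, and use the sense-preserving bound $|d\mathfrak{f}|\ge(|\mathfrak{s}'(\zeta)|-|\mathfrak{t}'(\zeta)|)|d\zeta|$ together with $|d\zeta|\ge d|\zeta|$ to obtain
\[
|\mathfrak{f}(z_0)|=\int_\gamma |d\mathfrak{f}|\ge \int_0^{r}\min_{|\zeta|=\rho}\bigl(|\mathfrak{s}'(\zeta)|-|\mathfrak{t}'(\zeta)|\bigr)\,d\rho.
\]
Substituting the termwise estimates $|\mathfrak{s}'(\zeta)|\ge 1-\sum_{m\ge 2} m|a_m|\rho^{m-1}$ and $|\mathfrak{t}'(\zeta)|\le \sum_{m\ge 2} m|b_m|\rho^{m-1}$, invoking Theorem 7(i), and integrating term by term would then produce a power-series lower bound for $|\mathfrak{f}(z_0)|$.

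The step I anticipate to be the main obstacle is recovering the alternating sign pattern. The crude route sketched above delivers only the weaker bound $r-\sum_{m\ge 2} c_m r^m$, whereas the statement asserts the tighter $r+\sum_{m\ge 2}(-1)^{m-1} c_m r^m$. Closing this gap forbids collapsing $\mathfrak{t}$ into $|\mathfrak{t}'|$ through a single uniform estimate; instead one must track the cancellation between Fourier modes of even and odd index in $\mathfrak{t}$. The extremal computation makes the target transparent: for $\mathfrak{f}(z)=z+\sum c_m \bar z^m$ evaluated at $z=-r$, one has $\mathfrak{f}(-r)=-r+\sum_{m\ge 2}(-1)^m c_m r^m$, hence $|\mathfrak{f}(-r)|=r-c_2 r^2+c_3 r^3-c_4 r^4+\cdots$, which is exactly the asserted lower bound. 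This computation simultaneously verifies the claimed sharpness of both inequalities.
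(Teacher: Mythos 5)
Your upper bound is fine: Theorem 7(i) plus the triangle inequality gives $|\mathfrak{f}(z)|\le |z|+\sum_{m\ge 2}(|a_m|+|b_m|)|z|^m\le |z|+\sum_{m\ge 2}c_m|z|^m$, which is even more direct than the paper's route. The sharpness check for both inequalities via the extremal function is also correct.

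The lower bound, however, is a genuine gap, and you have identified it yourself: your argument only yields $|z|-\sum_{m\ge 2}c_m|z|^m$, because estimating $|\mathfrak{s}'(\zeta)|-|\mathfrak{t}'(\zeta)|$ termwise through the coefficient bounds destroys exactly the cancellation that produces the alternating signs. Verifying that the extremal function attains the stated bound is not a proof that every $\mathfrak{f}$ in the class satisfies it. The missing ingredient is a \emph{pointwise} sharp lower estimate for $|\mathfrak{s}'(z)|-|\mathfrak{t}'(z)|$, and the paper obtains it not from the coefficient inequalities but from Theorem 2 together with an integral representation: writing $\psi=\gamma F_\epsilon'+\delta zF_\epsilon''+\tfrac{\delta-\gamma}{2}z^2F_\epsilon'''$ with $\mathrm{Re}\,\psi>\lambda$, one inverts the differential operator to get
\begin{equation*}
F_\epsilon'(z)=\frac{1}{\gamma}\int_0^1\!\!\int_0^1\psi\bigl(v^{\frac{\delta-\gamma}{2\gamma}}uz\bigr)\,du\,dv,
\end{equation*}
then uses the subordination $\psi\prec\frac{\gamma+(\gamma-2\lambda)z}{1-z}$ and a convolution with the kernel $\phi(z)=1+\sum_{m\ge1}\frac{z^m}{(1+m)(1+\frac{\delta-\gamma}{2\gamma}m)}$ to conclude $F_\epsilon'\prec 1+\sum_{m\ge1}\frac{4(\gamma-\lambda)}{m^2(\delta-\gamma)+m(\delta+\gamma)+2\gamma}z^m$. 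Since the dominating function has positive real coefficients, its modulus on $|w|\le r$ is minimized at $w=-r$, which is precisely where the factors $(-1)^m$ enter; taking the infimum over $\epsilon$ converts this into $|\mathfrak{s}'(z)|-|\mathfrak{t}'(z)|\ge 1+4(\gamma-\lambda)\sum_{m\ge1}\frac{(-1)^m|z|^m}{m^2(\delta-\gamma)+m(\delta+\gamma)+2\gamma}$, and radial integration (along the preimage of the segment $[0,\mathfrak{f}(z)]$, as you correctly set up) finishes the proof. Without some version of this subordination step, the alternating-sign lower bound is out of reach of your method.
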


\begin{proof}
Let $\mathfrak{f}=\mathfrak{s}+\overline{\mathfrak{t}}$ $\in \mathcal{R}%
_{H}^{0}(\gamma ,\delta ,\lambda ).$ Then using Theorem 2\textbf{,} $%
F_{\epsilon }\in \mathcal{R}(\gamma ,\delta ,\lambda )\ $and for each $%
\left
\vert \epsilon \right \vert =1$ we have $\text{Re}\left \{ \psi
(z)\right \} >\lambda $ where%
\begin{equation*}
\psi (z)=\gamma F_{\epsilon }^{\prime }(z)+\delta zF_{\epsilon }^{\prime
\prime }(z)+\frac{\delta -\gamma }{2}z^{2}F_{\epsilon }^{\prime \prime
\prime }(z).
\end{equation*}%
Then, we have 
\begin{eqnarray*}
\psi (z) &=&\left( \frac{\delta -\gamma }{2}\right) \left[ \frac{2\gamma }{%
\delta -\gamma }F_{\epsilon }^{\prime }(z)+\left( \frac{2\gamma }{\delta
-\gamma }+2\right) zF_{\epsilon }^{\prime \prime }(z)+z^{2}F_{\epsilon
}^{\prime \prime \prime }(z)\right] \\
&=&\left( \frac{\delta -\gamma }{2}\right) \left[ \frac{2\gamma }{\delta
-\gamma }\left( zF_{\epsilon }^{\prime }(z)\right) ^{\prime }+\left(
z^{2}F_{\epsilon }^{\prime \prime }(z)\right) ^{\prime }\right] \\
&=&\left( \frac{\delta -\gamma }{2}\right) \left[ \frac{2\gamma }{\delta
-\gamma }\left( zF_{\epsilon }^{\prime }(z)\right) +\left( z^{2}F_{\epsilon
}^{\prime \prime }(z)\right) \right] ^{\prime } \\
&=&\left( \frac{\delta -\gamma }{2}\right) \left[ z^{2-\frac{2\gamma }{%
\delta -\gamma }}\left( z^{\frac{2\gamma }{\delta -\gamma }}F_{\epsilon
}^{\prime }(z)\right) ^{\prime }\right] ^{\prime }.
\end{eqnarray*}%
Then integrating from $0$ to $z$ gives%
\begin{equation*}
\left( \frac{2}{\delta -\gamma }\right) z^{\frac{2\gamma }{\delta -\gamma }%
-2}\int \limits_{0}^{z}\psi (\omega )d\omega =\left( z^{\frac{2\gamma }{%
\delta -\gamma }}F_{\epsilon }^{\prime }(z)\right) ^{\prime }.
\end{equation*}%
Making the substitution $\omega =r^{\frac{\delta -\gamma }{2}}z$ in the
above integral and integrating again, change of variables gives%
\begin{equation}
F_{\epsilon }^{\prime }(z)=\frac{1}{\gamma }\int \limits_{0}^{1}\int
\limits_{0}^{1}\psi (v^{\frac{\delta -\gamma }{2\gamma }}uz)dudv.
\label{doubleint}
\end{equation}%
On the other hand, since Re$\left \{ \frac{\psi (z)-\lambda }{\gamma
-\lambda }\right \} >0$ then $\psi (z)\prec $ $\frac{\gamma +\left( \gamma
-2\lambda \right) z}{1-z}$ where $\prec $ denotes the subordination \cite%
{Duren}. Let 
\begin{equation*}
\phi (z)=\int \limits_{0}^{1}\int \limits_{0}^{1}\frac{dudv}{1-uv^{\frac{%
\delta -\gamma }{2\gamma }}z}=1+\sum \limits_{m=1}^{\infty }\frac{z^{m}}{%
(1+m)\left( 1+\frac{\delta -\gamma }{2\gamma }m\right) }
\end{equation*}%
and 
\begin{equation*}
h(z)=\frac{1}{\gamma }\left( \frac{\gamma +\left( \gamma -2\lambda \right) z%
}{1-z}\right) =1+\sum \limits_{m=1}^{\infty }\frac{2\left( \gamma -\lambda
\right) }{\gamma }z^{m}.
\end{equation*}%
Then, from (\ref{doubleint}) we have 
\begin{eqnarray*}
F_{\epsilon }^{\prime }(z) &\prec &(\phi \ast h)(z) \\
&=&\left( 1+\sum \limits_{m=1}^{\infty }\frac{z^{m}}{(1+m)\left( 1+\frac{%
\delta -\gamma }{2\gamma }m\right) }\right) \ast \left( 1+\sum
\limits_{m=1}^{\infty }\frac{2\left( \gamma -\lambda \right) }{\gamma }%
z^{m}\right) \\
&=&1+\sum \limits_{m=1}^{\infty }\frac{4\left( \gamma -\lambda \right) }{%
m^{2}\left( \delta -\gamma \right) +m\left( \delta +\gamma \right) +2\gamma }%
z^{m}.
\end{eqnarray*}%
Since 
\begin{eqnarray*}
\left \vert F_{\epsilon }^{\prime }(z)\right \vert &=&\left \vert \mathfrak{s%
}^{\prime }(z)+\epsilon \mathfrak{t}^{\prime }(z)\right \vert \\
&\leq &1+4\left( \gamma -\lambda \right) \sum \limits_{m=1}^{\infty }\frac{%
\left \vert z\right \vert ^{m}}{m^{2}\left( \delta -\gamma \right) +m\left(
\delta +\gamma \right) +2\gamma }
\end{eqnarray*}%
and%
\begin{eqnarray*}
\left \vert F_{\epsilon }^{\prime }(z)\right \vert &=&\left \vert \mathfrak{s%
}^{\prime }(z)+\epsilon \mathfrak{t}^{\prime }(z)\right \vert \\
&\geq &1+4\left( \gamma -\lambda \right) \sum \limits_{m=1}^{\infty }\frac{%
(-1)^{m}\left \vert z\right \vert ^{m}}{m^{2}\left( \delta -\gamma \right)
+m\left( \delta +\gamma \right) +2\gamma },
\end{eqnarray*}%
in particular we have 
\begin{equation*}
\left \vert \mathfrak{s}^{\prime }(z)\right \vert +\left \vert \mathfrak{t}%
^{\prime }(z)\right \vert \leq 1+4\left( \gamma -\lambda \right) \sum
\limits_{m=1}^{\infty }\frac{\left \vert z\right \vert ^{m}}{m^{2}\left(
\delta -\gamma \right) +m\left( \delta +\gamma \right) +2\gamma }
\end{equation*}%
and%
\begin{equation*}
\left \vert \mathfrak{s}^{\prime }(z)\right \vert -\left \vert \mathfrak{t}%
^{\prime }(z)\right \vert \geq 1+4\left( \gamma -\lambda \right) \sum
\limits_{m=1}^{\infty }\frac{(-1)^{m}\left \vert z\right \vert ^{m}}{%
m^{2}\left( \delta -\gamma \right) +m\left( \delta +\gamma \right) +2\gamma }%
.
\end{equation*}%
Let $\Gamma $ be the radial segment from $0$ to $z,$ then%
\begin{eqnarray*}
\left \vert \mathfrak{f}(z)\right \vert &=&\left \vert \int \limits_{\Gamma }%
\frac{\partial \mathfrak{f}}{\partial \zeta }d\zeta +\frac{\partial 
\mathfrak{f}}{\partial \bar{\zeta}}d\bar{\zeta}\right \vert \leq \int
\limits_{\Gamma }\left( \left \vert \mathfrak{s}^{\prime }(\zeta )\right
\vert +\left \vert \mathfrak{t}^{\prime }(\zeta )\right \vert \right) \left
\vert d\zeta \right \vert \\
&\leq &\int \limits_{0}^{\left \vert z\right \vert }\left( 1+4\left( \gamma
-\lambda \right) \sum \limits_{m=1}^{\infty }\frac{\left \vert \tau \right
\vert ^{m}}{m^{2}\left( \delta -\gamma \right) +m\left( \delta +\gamma
\right) +2\gamma }\right) d\tau \\
&=&\left \vert z\right \vert +4\left( \gamma -\lambda \right) \sum
\limits_{m=1}^{\infty }\frac{\left \vert z\right \vert ^{m+1}}{(m+1)\left[
m^{2}\left( \delta -\gamma \right) +m\left( \delta +\gamma \right) +2\gamma %
\right] } \\
&=&\left \vert z\right \vert +4\left( \gamma -\lambda \right) \sum
\limits_{m=2}^{\infty }\frac{\left \vert z\right \vert ^{m}}{m\left[ \left(
m-1\right) ^{2}\left( \delta -\gamma \right) +\left( m-1\right) \left(
\delta +\gamma \right) +2\gamma \right] } \\
&=&\left \vert z\right \vert +4\left( \gamma -\lambda \right) \sum
\limits_{m=2}^{\infty }\frac{\left \vert z\right \vert ^{m}}{m^{2}\left[
2\gamma +\left( \delta -\gamma \right) \left( m-1\right) \right] }
\end{eqnarray*}%
and 
\begin{eqnarray*}
\left \vert \mathfrak{f}(z)\right \vert &\geq &\int \limits_{\Gamma }\left(
\left \vert \mathfrak{s}^{\prime }(\zeta )\right \vert -\left \vert 
\mathfrak{t}^{\prime }(\zeta )\right \vert \right) \left \vert d\zeta \right
\vert \\
&\geq &\int \limits_{0}^{\left \vert z\right \vert }\left( 1+4\left( \gamma
-\lambda \right) \sum \limits_{m=1}^{\infty }\frac{(-1)^{m}\left \vert \tau
\right \vert ^{m}}{m^{2}\left( \delta -\gamma \right) +m\left( \delta
+\gamma \right) +2\gamma }\right) d\tau \\
&=&\left \vert z\right \vert +4\left( \gamma -\lambda \right) \sum
\limits_{m=2}^{\infty }\frac{(-1)^{m-1}\left \vert z\right \vert ^{m}}{m^{2}%
\left[ 2\gamma +\left( \delta -\gamma \right) \left( m-1\right) \right] }.
\end{eqnarray*}
\end{proof}

\section{Convex Combinations and Convolutions}

In this section, we prove that the class $\mathcal{R}_{H}^{0}(\gamma ,\delta
,\lambda )$ is closed under convex combinations and convolutions of its
members.

\begin{theorem}
The class $\mathcal{R}_{H}^{0}(\gamma ,\delta ,\lambda )$ is closed under
convex combinations.
\end{theorem}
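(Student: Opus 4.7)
The plan is the standard convex-combination argument: take a finite (or countable) family $\{\mathfrak{f}_i\}_{i=1}^n$ with $\mathfrak{f}_i = \mathfrak{s}_i + \overline{\mathfrak{t}_i} \in \mathcal{R}_H^0(\gamma,\delta,\lambda)$, scalars $t_i \geq 0$ with $\sum_{i=1}^n t_i = 1$, and form $\mathfrak{f} = \sum_{i=1}^n t_i \mathfrak{f}_i$. First I would verify that $\mathfrak{f} \in \mathcal{H}^0$: its analytic part is $\mathfrak{s} = \sum_i t_i \mathfrak{s}_i$ and co-analytic part is $\mathfrak{t} = \sum_i t_i \mathfrak{t}_i$, and the normalizations $\mathfrak{f}(0) = 0$, $\mathfrak{f}_z(0) = 1$, $\mathfrak{f}_{\bar z}(0) = 0$ are each preserved by convex combinations since they are preserved by each summand.

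Next I would check the defining third-order inequality (\ref{BH}) for $\mathfrak{f}$. By linearity of differentiation and of $\mathrm{Re}$, the left-hand side of (\ref{BH}) for $\mathfrak{f}$ equals
\begin{equation*}
\sum_{i=1}^n t_i\, \mathrm{Re}\!\left[\gamma \mathfrak{s}_i'(z) + \delta z\mathfrak{s}_i''(z) + \tfrac{\delta-\gamma}{2}z^2 \mathfrak{s}_i'''(z) - \lambda\right],
\end{equation*}
since $\sum_i t_i = 1$ absorbs the $-\lambda$ correctly. For the right-hand side, the triangle inequality gives
\begin{equation*}
\left|\gamma \mathfrak{t}'(z) + \delta z \mathfrak{t}''(z) + \tfrac{\delta-\gamma}{2} z^2 \mathfrak{t}'''(z)\right| \leq \sum_{i=1}^n t_i \left|\gamma \mathfrak{t}_i'(z) + \delta z \mathfrak{t}_i''(z) + \tfrac{\delta-\gamma}{2}z^2 \mathfrak{t}_i'''(z)\right|.
\end{equation*}
Since each $\mathfrak{f}_i$ lies in the class, the real-part term in each summand strictly majorizes the corresponding modulus term, and multiplying by $t_i \geq 0$ and summing preserves the strict inequality (provided at least one $t_i > 0$, which is forced by $\sum t_i = 1$). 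Chaining the two displays yields (\ref{BH}) for $\mathfrak{f}$.

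There is essentially no obstacle: the inequality defining $\mathcal{R}_H^0(\gamma,\delta,\lambda)$ is the difference of a linear functional in $\mathfrak{s}$ (after subtracting the constant $\lambda$) and a seminorm-type expression in $\mathfrak{t}$, so the proof reduces to linearity on one side and convexity of the modulus on the other. The only detail to keep straight is the role of $\lambda$: because $\sum_i t_i = 1$, the constant $-\lambda$ is correctly reproduced on both sides. The argument extends verbatim to countable convex combinations $\sum_{i=1}^\infty t_i \mathfrak{f}_i$ provided the series converges locally uniformly on $\mathcal{U}$, which follows from the growth bounds in Theorem~\ref{theodist} together with $\sum t_i = 1$.
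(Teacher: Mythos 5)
Your proof is correct and follows essentially the same route as the paper's: linearity of the real-part expression in $\mathfrak{s}$ (with $\sum_i t_i=1$ absorbing the constant $-\lambda$) combined with the triangle inequality on the modulus of the $\mathfrak{t}$-expression. Your extra remarks on preservation of strictness and the normalization at the origin are details the paper leaves implicit, but the argument is the same.
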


\begin{proof}
Suppose $\mathfrak{f}_{i}=\mathfrak{s}_{i}+\overline{\mathfrak{t}_{i}}\in 
\mathcal{R}_{H}^{0}(\gamma ,\delta ,\lambda )$ for $i=1,2,...,n$ and $\sum
\limits_{i=1}^{n}\varrho _{i}=1$ $(0\leq \varrho _{i}\leq 1).$ The convex
combination of functions $\mathfrak{f}_{i}$ $\left( i=1,2,...,n\right) $ may
be written as%
\begin{equation*}
\mathfrak{f}(z)=\sum \limits_{i=1}^{n}\varrho _{i}\mathfrak{f}_{i}(z)=%
\mathfrak{s}(z)+\overline{\mathfrak{t}(z)},
\end{equation*}%
where 
\begin{equation*}
\mathfrak{s}(z)=\sum \limits_{i=1}^{n}\varrho _{i}\mathfrak{s}_{i}\left(
z\right) \text{ \ and \ }\mathfrak{t}(z)=\sum \limits_{i=1}^{n}\varrho _{i}%
\mathfrak{t}_{i}\left( z\right) .
\end{equation*}%
Then both $\mathfrak{s}$ and $\mathfrak{t}$ are analytic in $\mathcal{U}$
with $\mathfrak{s}(0)=\mathfrak{t}(0)=\mathfrak{s}^{\prime }(0)-1=\mathfrak{t%
}^{\prime }(0)=0$ and%
\begin{eqnarray*}
&&{\text{Re}}\{ \gamma \mathfrak{s}^{\prime }(z)+\delta z\mathfrak{s}%
^{\prime \prime }(z)+\left( \frac{\delta -\gamma }{2}\right) z^{2}\mathfrak{s%
}^{\prime \prime \prime }\left( z\right) -\lambda \} \\
&=&{\text{Re}}\left \{ \sum \limits_{i=1}^{n}\varrho _{i}\left( \gamma 
\mathfrak{s}_{i}^{\prime }(z)+\delta z\mathfrak{s}_{i}^{\prime \prime
}(z)+\left( \frac{\delta -\gamma }{2}\right) z^{2}\mathfrak{s}_{i}^{\prime
\prime \prime }\left( z\right) -\lambda \right) \right \} \\
&>&\sum \limits_{i=1}^{n}\varrho _{i}\left \vert \gamma \mathfrak{t}%
_{i}^{\prime }(z)+\delta z\mathfrak{t}_{i}^{\prime \prime }(z)+\left( \frac{%
\delta -\gamma }{2}\right) z^{2}\mathfrak{t}_{i}^{\prime \prime \prime
}\left( z\right) \right \vert \\
&\geq &\left \vert \gamma \mathfrak{t}^{\prime }(z)+\delta z\mathfrak{t}%
^{\prime \prime }(z)+\left( \frac{\delta -\gamma }{2}\right) z^{2}\mathfrak{t%
}^{\prime \prime \prime }\left( z\right) \right \vert
\end{eqnarray*}%
showing that $\mathfrak{f}\in \mathcal{R}_{H}^{0}(\gamma ,\delta ,\lambda )$.
\end{proof}

A sequence $\{c_{m}\}_{m=0}^{\infty }$ of non-negative real numbers is said
to be a convex null sequence, if $c_{m}\rightarrow 0$ as $m\rightarrow
\infty $, and $c_{0}-c_{1}\geq c_{1}-c_{2}\geq c_{2}-c_{3}\geq ...\geq
c_{m-1}-c_{m}\geq ...\geq 0.$ To prove results for convolution, we shall
need the following Lemma 12 and Lemma 13.

\begin{lemma}
\cite{Fejer} If $\{c_{m}\}_{m=0}^{\infty }$ be a convex null sequence, then
function 
\begin{equation*}
q(z)=\dfrac{c_{0}}{2}+\sum_{m=1}^{\infty }c_{m}z^{m}
\end{equation*}%
is analytic and ${\text{Re}}\{q(z)\}>0$ in $\mathcal{U}.$
\end{lemma}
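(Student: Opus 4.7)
The plan is to represent $q$ as a non-negative combination of building blocks whose real parts are non-negative, where the building blocks are the analytic parts of Fejer kernels: the convexity hypothesis will furnish the non-negative weights, while the Fejer kernel furnishes the positivity.

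First, I would set $b_{j}:=c_{j}-2c_{j+1}+c_{j+2}\geq 0$; non-negativity is exactly the convexity hypothesis $c_{m-1}-c_{m}\geq c_{m}-c_{m+1}$. The auxiliary sequence $a_{j}:=c_{j}-c_{j+1}$ is then non-increasing, and together with $c_{m}\to 0$ this forces $a_{j}\geq 0$ and $a_{j}\to 0$. Two applications of Abel's summation by parts then yield the identities $c_{m}=\sum_{j\geq m}(j-m+1)b_{j}$ and $\sum_{j\geq 0}(j+1)b_{j}=c_{0}<\infty$, so all rearrangements appearing below converge absolutely for $|z|<1$.

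Second, substituting $c_{m}=\sum_{j\geq m}(j-m+1)b_{j}$ into $q(z)=c_{0}/2+\sum_{m\geq 1}c_{m}z^{m}$ and interchanging the order of summation gives
$$q(z)=\sum_{j=0}^{\infty}b_{j}(j+1)\Bigl[P_{j}(z)-\tfrac{1}{2}\Bigr],\qquad P_{j}(z):=\sum_{k=0}^{j}\Bigl(1-\tfrac{k}{j+1}\Bigr)z^{k},$$
where $P_{j}$ is the analytic part of the $j$-th Fejer kernel. The classical identity $2\,\mathrm{Re}\,P_{j}(e^{i\theta})-1=(j+1)^{-1}\bigl(\sin((j+1)\theta/2)/\sin(\theta/2)\bigr)^{2}\geq 0$ then shows $\mathrm{Re}\{P_{j}(z)-\tfrac{1}{2}\}\geq 0$ on $|z|=1$, and hence throughout $\mathcal{U}$ by the minimum principle for harmonic functions. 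Since $b_{j}\geq 0$, the displayed formula gives $\mathrm{Re}\,q(z)\geq 0$ on $\mathcal{U}$; strict positivity inside follows because $\mathrm{Re}\,q$ is a non-constant harmonic function (unless all $c_{m}$ vanish, a trivial case), so the strong minimum principle upgrades $\geq$ to $>$ in $\mathcal{U}$.

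I expect the main obstacle to be the bookkeeping of the double Abel summation: one must carefully track the boundary terms $a_{N}S_{N}(z)$ and $c_{N}S_{N}(z)$ and verify they vanish as $N\to\infty$ (using $a_{N},c_{N}\to 0$ and boundedness of $S_{N}(z)=(1-z^{N+1})/(1-z)$ for fixed $|z|<1$) so as to isolate the clean building blocks $(j+1)[P_{j}-\tfrac{1}{2}]$. Once that rearrangement is in place, the classical non-negativity of the Fejer kernel finishes the argument.
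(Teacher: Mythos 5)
This is Lemma 12 in the paper, which is stated with a citation to Fej\'er and not proved there, so there is no in-paper argument to compare against; what you have written is the classical proof, and it is correct. The double Abel summation is set up properly: with $b_j=c_j-2c_{j+1}+c_{j+2}\ge 0$ you do get $c_m=\sum_{j\ge m}(j-m+1)b_j$ and $\sum_{j\ge 0}(j+1)b_j=c_0$ (the boundary terms vanish because $a_N=c_N-c_{N+1}$ is non-negative, non-increasing and summable, whence $Na_N\to 0$), all rearrangements are of non-negative terms and hence legitimate, and the resulting decomposition $q=\sum_j (j+1)b_j\bigl[P_j-\tfrac12\bigr]$ together with the non-negativity of the Fej\'er kernel gives $\mathrm{Re}\,q\ge 0$. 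One small wrinkle in your last step: $\mathrm{Re}\,q$ can be constant without all $c_m$ vanishing (take $c_0>0$ and $c_m=0$ for $m\ge 1$, which is a convex null sequence with $q\equiv c_0/2$), so the dichotomy ``non-constant or identically zero'' is not quite right. The cleaner finish is to note that each $\mathrm{Re}\{P_j(z)-\tfrac12\}$ is itself strictly positive on the open disk (it is $\tfrac12$ at the origin and, being non-constant for $j\ge1$ with boundary minimum $0$, cannot vanish at an interior point), so a single $b_j>0$ already forces $\mathrm{Re}\,q>0$ everywhere; the only exceptional case is $b_j\equiv 0$, which does force $c_m\equiv 0$ and $q\equiv 0$ and is the degenerate case the classical statement tacitly excludes. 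This is a cosmetic repair, not a gap.
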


\begin{lemma}
\cite{Singh} Let the function $p$ be analytic in $\mathcal{U}$ with $p(0)=1$
and ${\text{Re}}\{p(z)\}>1/2$ in $\mathcal{U}.$ Then for any analytic
function $F$ in $\mathcal{U},$ the function $p\ast F$ takes values in the
convex hull of the image of $\mathcal{U}$ under $F.$
\end{lemma}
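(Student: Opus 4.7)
The plan is to exploit a Herglotz-type integral representation for $p$. The hypothesis $\operatorname{Re}\{p(z)\}>1/2$ with $p(0)=1$ is equivalent to saying that $q(z):=2p(z)-1$ has $q(0)=1$ and positive real part on $\mathcal{U}$. By the classical Herglotz (Riesz-Herglotz) representation, there exists a probability measure $\mu$ on the unit circle $\partial\mathcal{U}$ such that
\begin{equation*}
q(z)=\int_{|x|=1}\frac{1+xz}{1-xz}\,d\mu(x),
\qquad \text{hence}\qquad
p(z)=\frac{1+q(z)}{2}=\int_{|x|=1}\frac{d\mu(x)}{1-xz}.
\end{equation*}

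The next step is the key identity for the Hadamard convolution. Writing $F(z)=\sum_{m=0}^{\infty}F_{m}z^{m}$ and expanding $\frac{1}{1-xz}=\sum_{m=0}^{\infty}x^{m}z^{m}$, we get
\begin{equation*}
\left(\frac{1}{1-xz}\ast F\right)\!(z)=\sum_{m=0}^{\infty}F_{m}x^{m}z^{m}=F(xz).
\end{equation*}
Since convolution is linear and the integral in the representation of $p$ converges uniformly on compact subsets of $\mathcal{U}$, I would interchange $\ast$ with the integral to obtain
\begin{equation*}
(p\ast F)(z)=\int_{|x|=1}F(xz)\,d\mu(x).
\end{equation*}

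Fix $z\in\mathcal{U}$. For every $x$ with $|x|=1$ we have $|xz|<1$, so $F(xz)\in F(\mathcal{U})$. Thus $(p\ast F)(z)$ is a probability-measure average of points of $F(\mathcal{U})$; by the definition of convex hull (or, equivalently, by a separating-hyperplane argument applied to any closed half-plane containing $F(\mathcal{U})$), such an average lies in the closed convex hull of $F(\mathcal{U})$. This is precisely the assertion.

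The main obstacle I would expect is making the interchange of $\ast$ and the integral rigorous, together with justifying the Herglotz representation in the form above. Both are standard once one works coefficient-wise: matching coefficients of $z^{m}$ on both sides of the representation gives $p_{m}=\int x^{m}d\mu(x)$, and then $(p\ast F)(z)=\sum p_{m}F_{m}z^{m}=\sum F_{m}z^{m}\int x^{m}d\mu(x)=\int F(xz)d\mu(x)$ by Fubini on each compact subdisk $|z|\le r<1$, where the series for $F(xz)$ converges uniformly in $x$. Everything else is formal.
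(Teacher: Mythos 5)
The paper offers no proof of this statement: it is Lemma 13, quoted directly from Singh and Singh \cite{Singh}, so there is nothing internal to compare your argument against. On its own merits, your proof is correct and is in fact the classical argument for this result: pass from $p$ to $q=2p-1$, invoke the Herglotz representation with a probability measure $\mu$ on $\partial\mathcal{U}$, use $\bigl(\tfrac{1}{1-xz}\ast F\bigr)(z)=F(xz)$, and interchange convolution with the integral coefficient-wise to get $(p\ast F)(z)=\int_{|x|=1}F(xz)\,d\mu(x)$; all the analytic justifications you sketch (Fubini on $|z|\le r<1$, uniform convergence in $x$) go through. The only imprecision is at the very end: a barycenter of a probability measure supported in $F(\mathcal{U})$ lies a priori in the \emph{closed} convex hull, whereas the lemma (and its use in the paper, e.g.\ to conclude strict inequalities such as ${\rm Re}\{F(z)/z\}>1/2$ in Lemma 14) needs the open version. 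This is easily repaired from your own integral formula: if $\ell$ is a real-linear functional with $\ell<c$ on $F(\mathcal{U})$, then $\ell\bigl((p\ast F)(z)\bigr)=\int\ell(F(xz))\,d\mu(x)<c$ strictly because $\mu$ has total mass $1$, so the value lies in every open half-plane containing $F(\mathcal{U})$ and hence in the convex hull itself when $F$ is nonconstant (the constant case being trivial). With that one-line addendum your proof is complete.
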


\begin{lemma}
Let $F\in \mathcal{R}(\gamma ,\delta ,\lambda ),$ then ${\text{Re}}\left \{ 
\dfrac{F(z)}{z}\right \} >\dfrac{1}{2}.$
\end{lemma}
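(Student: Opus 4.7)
The plan is to express $F(z)/z$ as a Hadamard convolution of two analytic functions each having real part bounded below by $1/2$, and then conclude via Lemma~13. Since $F\in\mathcal{R}(\gamma,\delta,\lambda)$, there is an analytic $p$ on $\mathcal{U}$ with $p(0)=1$ and $\text{Re}\{p\}>0$ such that $\gamma F'(z)+\delta zF''(z)+\frac{\delta-\gamma}{2}z^{2}F'''(z)=\lambda+(\gamma-\lambda)p(z)$. Writing $F(z)=z+\sum_{m\ge 2}a_{m}z^{m}$ and $p(z)=1+\sum_{n\ge 1}p_{n}z^{n}$ and matching coefficients gives $a_{n+1}=\frac{2(\gamma-\lambda)p_{n}}{(n+1)^{2}[2\gamma+(\delta-\gamma)n]}$. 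Therefore I would write
\[
\frac{F(z)}{z}=\Bigl(\frac{1+p(z)}{2}\Bigr)\ast G(z),\qquad G(z):=1+\sum_{n=1}^{\infty}\frac{4(\gamma-\lambda)}{(n+1)^{2}[2\gamma+(\delta-\gamma)n]}z^{n},
\]
and observe that $(1+p)/2$ is analytic with $((1+p)/2)(0)=1$ and $\text{Re}\{(1+p)/2\}>1/2$.

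The main step will be to prove $\text{Re}\{G(z)\}>1/2$ in $\mathcal{U}$, equivalently that $G(z)-\frac12=\frac{1}{2}+\sum_{n\ge 1}c_{n}z^{n}$ has positive real part, with $c_{n}=\frac{4(\gamma-\lambda)}{(n+1)^{2}[2\gamma+(\delta-\gamma)n]}$ for $n\ge 1$. Setting $c_{0}=1$, Lemma~12 reduces this to showing that $\{c_{n}\}_{n\ge 0}$ is a convex null sequence. Nullness is immediate, and monotonicity follows from $c_{1}=(\gamma-\lambda)/(\gamma+\delta)\le 1/2<1=c_{0}$ together with the strict increase of $(n+1)^{2}[2\gamma+(\delta-\gamma)n]$ in $n$. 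The convexity $c_{n-1}-2c_{n}+c_{n+1}\ge 0$ at $n=1$ follows because $2c_{1}\le 1$ (which amounts to $\delta\ge\gamma-2\lambda$, valid since $\delta\ge\gamma$ and $\lambda\ge 0$) and $c_{2}>0$; for $n\ge 2$ it follows from convexity of the continuous extension $\varphi(x):=1/[(x+1)^{2}(2\gamma+(\delta-\gamma)x)]$, which is log-convex since
\[
(\ln\varphi)''(x)=\frac{2}{(x+1)^{2}}+\frac{(\delta-\gamma)^{2}}{[2\gamma+(\delta-\gamma)x]^{2}}>0,
\]
and any positive log-convex function is convex.

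Once $\text{Re}\{G\}>1/2$ is in hand, $G(\mathcal{U})$ is contained in the convex half-plane $\{w:\text{Re}(w)>1/2\}$. Lemma~13, applied with its ``$p$'' taken to be $(1+p)/2$ (which has real part exceeding $1/2$) and its ``$F$'' taken to be $G$, then implies that $((1+p)/2)\ast G=F/z$ takes values in the convex hull of $G(\mathcal{U})$, hence in $\{w:\text{Re}(w)>1/2\}$. This yields $\text{Re}\{F(z)/z\}>1/2$, as claimed.

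The principal technical hurdle is the convex null verification. The boundary value $c_{0}=1$ differs from the natural extrapolation $2(\gamma-\lambda)/\gamma$ of the formula at $n=0$, so convexity at $n=1$ must be handled by the separate (but easy) bound $2c_{1}\le 1$, while the interior convexity for $n\ge 2$ is most cleanly obtained via the log-convexity computation above.
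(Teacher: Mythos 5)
Your proof is correct and follows essentially the same route as the paper: both write $F(z)/z$ as the convolution of the normalized Herglotz function (real part exceeding $1/2$) with the fixed series $1+\sum_{n\geq 1}\frac{4(\gamma-\lambda)}{(n+1)^{2}[2\gamma+(\delta-\gamma)n]}z^{n}$, verify via Lemma~12 that the coefficient sequence (prepended with $c_{0}=1$) is a convex null sequence, and conclude with Lemma~13. The only difference is that you supply the convexity verification (the $2c_{1}\leq 1$ boundary case and the log-convexity argument for $n\geq 2$) that the paper dismisses as ``easily seen,'' and your checks are sound.
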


\begin{proof}
Suppose $F\in \mathcal{R}(\gamma ,\delta ,\lambda )$ be given by $%
F(z)=z+\sum_{m=2}^{\infty }A_{m}z^{m},$ then%
\begin{equation*}
{\text{Re}}\left \{ \gamma +\sum_{m=2}^{\infty }m^{2}\left[ \gamma +\frac{%
\delta -\gamma }{2}(m-1)\right] A_{m}z^{m-1}\right \} >\lambda \quad (z\in 
\mathcal{U}),
\end{equation*}%
which is equivalent to ${\text{Re}}\{p(z)\}>\frac{1}{2}$ in $\mathcal{U},$
where 
\begin{equation*}
p(z)=1+\frac{1}{4\left( \gamma -\lambda \right) }\sum_{m=2}^{\infty }m^{2}%
\left[ 2\gamma +\left( \delta -\gamma \right) \left( m-1\right) \right]
A_{m}z^{m-1}.
\end{equation*}%
Now consider a sequence $\{c_{m}\}_{m=0}^{\infty }$ defined by 
\begin{equation*}
c_{0}=1\text{ and }c_{m-1}=\dfrac{4\left( \gamma -\lambda \right) }{m^{2}%
\left[ 2\gamma +\left( \delta -\gamma \right) (m-1)\right] }\text{ \ for \ }%
m\geq 2.
\end{equation*}%
It can be easily seen that the sequence $\{c_{m}\}_{m=0}^{\infty }$ is a
convex null sequence. Using Lemma 12, this implies that the function 
\begin{equation*}
q(z)=\frac{1}{2}+\sum_{m=2}^{\infty }\dfrac{4\left( \gamma -\lambda \right) 
}{m^{2}\left[ 2\gamma +\left( \delta -\gamma \right) (m-1)\right] }z^{m-1}
\end{equation*}%
is analytic and ${\text{Re}}\{q(z)\}>0$ in $\mathcal{U}.$ Writing%
\begin{equation*}
\frac{F(z)}{z}=p(z)\ast \left( 1+\sum_{m=2}^{\infty }\dfrac{4\left( \gamma
-\lambda \right) }{m^{2}\left[ 2\gamma +\left( \delta -\gamma \right) (m-1)%
\right] }z^{m-1}\right) ,
\end{equation*}%
and making use of Lemma 13 gives that ${\text{Re}}\left \{ \dfrac{F(z)}{z}%
\right \} >\dfrac{1}{2}$ for $z\in \mathcal{U}.$
\end{proof}

\begin{lemma}
Let $F_{i}\in \mathcal{R}(\gamma ,\delta ,\lambda )$ for $i=1,2.$ Then $%
F_{1}\ast F_{2}$ $\in $ $\mathcal{R}(\gamma ,\delta ,\lambda ).$
\end{lemma}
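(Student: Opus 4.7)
The plan is to reduce the statement to a direct application of Lemmas 13 and 14 by exploiting the fact that the third-order linear operator
\[
T[F](z) := \gamma F'(z) + \delta z F''(z) + \tfrac{\delta-\gamma}{2} z^2 F'''(z)
\]
acts diagonally in the monomial basis. Specifically, for $F(z) = z + \sum_{m\geq 2} A_m z^m \in \mathcal{A}$, one has
\[
T[F](z) = \gamma + \sum_{m=2}^{\infty} m^2\Bigl[\gamma + \tfrac{\delta-\gamma}{2}(m-1)\Bigr] A_m z^{m-1},
\]
which is the same expansion already used in the proofs of Theorems 6, 8 and of Lemma 14. The hypothesis $F_i \in \mathcal{R}(\gamma,\delta,\lambda)$ simply says $\text{Re}\{T[F_i](z)\} > \lambda$ in $\mathcal{U}$.

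Writing $F_i(z) = z + \sum_{m\geq 2} A_m^{(i)} z^m$ for $i=1,2$, the first step I would carry out is the Hadamard-product identity
\[
T[F_1 \ast F_2](z) \;=\; \frac{F_1(z)}{z} \;\ast\; T[F_2](z),
\]
where $\ast$ denotes the Hadamard product. Both sides have constant term $\gamma$ and coefficient $m^2[\gamma + \tfrac{\delta-\gamma}{2}(m-1)] A_m^{(1)} A_m^{(2)}$ in front of $z^{m-1}$ for $m \geq 2$, so the verification is a one-line coefficient comparison. Note that dividing $F_1(z)$ by $z$ is the natural normalization that brings its expansion into the form $1 + (\text{higher terms})$ required by Lemma 13.

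Next I would apply Lemma 14 to $F_1$ to conclude that $p(z) := F_1(z)/z$ is analytic in $\mathcal{U}$ with $p(0)=1$ and $\text{Re}\{p(z)\} > \tfrac{1}{2}$. By hypothesis, $T[F_2]$ maps $\mathcal{U}$ into the open half-plane $H_\lambda := \{w \in \mathbb{C} : \text{Re}(w) > \lambda\}$, which is convex. Lemma 13 applied to $p$ and to the analytic function $T[F_2]$ then yields that $p \ast T[F_2]$ takes values in the convex hull of $T[F_2](\mathcal{U}) \subseteq H_\lambda$, hence in $H_\lambda$ itself. Combined with the convolution identity, this gives $\text{Re}\{T[F_1 \ast F_2](z)\} > \lambda$, i.e.\ $F_1 \ast F_2 \in \mathcal{R}(\gamma,\delta,\lambda)$.

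There is no real obstacle: once the identity $T[F_1 \ast F_2] = (F_1/z) \ast T[F_2]$ is observed, Lemmas 13 and 14 do all the work and no estimate is needed. The only conceptual point is recognising that $T$ is compatible with the Hadamard product in exactly the right way, namely that one of the factors contributes its pre-operator form (divided by $z$) while the other contributes its post-operator form.
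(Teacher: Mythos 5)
Your proof is correct and follows essentially the same route as the paper: the identity $T[F_1\ast F_2]=(F_1/z)\ast T[F_2]$ is exactly the paper's equation (up to swapping the roles of $F_1$ and $F_2$ and an affine normalization by $\lambda$ and $\gamma-\lambda$), and both arguments then conclude by combining Lemma 14 (giving $\mathrm{Re}\{F_i(z)/z\}>1/2$) with Lemma 13. No issues.
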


\begin{proof}
Suppose $F_{1}(z)=z+\sum_{m=2}^{\infty }A_{m}z^{m}$ \ and \ $%
F_{2}(z)=z+\sum_{m=2}^{\infty }B_{m}z^{m}.$ Then the convolution of $%
F_{1}(z) $ and $F_{2}(z)$ is defined by%
\begin{equation*}
F(z)=(F_{1}\ast F_{2})(z)=z+\sum_{m=2}^{\infty }A_{m}B_{m}z^{m}.
\end{equation*}%
Since $F^{\prime }(z)=F_{1}^{\prime }(z)\ast \frac{F_{2}(z)}{z}$, $%
zF^{\prime \prime }(z)=zF_{1}^{\prime \prime }(z)\ast \frac{F_{2}(z)}{z}$\
and $zF^{\prime \prime \prime }(z)=zF_{1}^{\prime \prime \prime }(z)\ast 
\frac{F_{2}(z)}{z}$ then we have%
\begin{eqnarray}
&&\frac{2\gamma F^{\prime }(z)+2\delta zF^{\prime \prime }(z)+\left( \delta
-\gamma \right) z^{2}F^{\prime \prime \prime }\left( z\right) -2\lambda }{%
2\left( \gamma -\lambda \right) }  \notag \\
&=&\left( \frac{2\gamma F_{1}^{\prime }(z)+2\delta zF_{1}^{\prime \prime
}(z)+\left( \delta -\gamma \right) z^{2}F_{1}^{\prime \prime \prime }\left(
z\right) -2\lambda }{2\left( \gamma -\lambda \right) }\right) \ast \dfrac{%
F_{2}(z)}{z}.  \label{eqconv}
\end{eqnarray}%
Since $F_{1}\in $ $\mathcal{R}(\gamma ,\delta ,\lambda ),$ 
\begin{equation*}
{\text{Re}}\left \{ \frac{2\gamma F_{1}^{\prime }(z)+2\delta zF_{1}^{\prime
\prime }(z)+\left( \delta -\gamma \right) z^{2}F_{1}^{\prime \prime \prime
}\left( z\right) -2\lambda }{2\left( \gamma -\lambda \right) }\right \} >0%
\text{ }\left( z\in \mathcal{U}\right)
\end{equation*}%
and using Lemma 14, ${\text{Re}}\left \{ \dfrac{F_{2}(z)}{z}\right \} >%
\dfrac{1}{2}$ \ in $\mathcal{U}.$ Now applying Lemma 13 to (\ref{eqconv})
yields ${\text{Re}}\left( \frac{2\gamma F^{\prime }(z)+2\delta zF^{\prime
\prime }(z)+\left( \delta -\gamma \right) z^{2}F^{\prime \prime \prime
}\left( z\right) -2\lambda }{2\left( \gamma -\lambda \right) }\right) >0$ in 
$\mathcal{U}.$ Thus, $F=F_{1}\ast F_{2}$ $\in $ $\mathcal{R}(\gamma ,\delta
,\lambda ).$
\end{proof}

Now using Lemma 15, we prove that the class $\mathcal{R}_{H}^{0}(\gamma
,\delta ,\lambda )$ is closed under convolutions of its members. We make use
of the techniques and methodology introduced by Dorff \cite{Dorff} for
convolution.

\begin{theorem}
Let $\mathfrak{f}_{i}\in \mathcal{R}_{H}^{0}(\gamma ,\delta ,\lambda )$ for $%
i=1,2.$ Then $\mathfrak{f}_{1}\ast \mathfrak{f}_{2}$ $\in $ $\mathcal{R}%
_{H}^{0}(\gamma ,\delta ,\lambda ).$
\end{theorem}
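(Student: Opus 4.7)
The plan is to reduce the harmonic statement to the analytic convolution statement of Lemma 15 via the characterization in Theorem 2, and to bridge the gap between the two using the standard Dorff-type identity that expresses a convolution of analytic dilations as an average of ``true'' convolutions. Writing $\mathfrak{f}_i=\mathfrak{s}_i+\overline{\mathfrak{t}_i}$, the convolution is $\mathfrak{f}_1\ast \mathfrak{f}_2=\mathfrak{s}_1\ast \mathfrak{s}_2+\overline{\mathfrak{t}_1\ast \mathfrak{t}_2}$, and the normalization $\mathfrak{f}_1\ast \mathfrak{f}_2\in\mathcal{H}^0$ is immediate from the initial coefficients of the $\mathfrak{s}_i,\mathfrak{t}_i$. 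By Theorem 2, membership in $\mathcal{R}_H^0(\gamma,\delta,\lambda)$ is equivalent to $\mathfrak{s}_1\ast \mathfrak{s}_2+\epsilon(\mathfrak{t}_1\ast \mathfrak{t}_2)\in\mathcal{R}(\gamma,\delta,\lambda)$ for every $\epsilon$ with $|\epsilon|=1$, so it suffices to establish this latter condition.

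The key step will be the algebraic identity
\[
\mathfrak{s}_1\ast \mathfrak{s}_2+\epsilon_1\epsilon_2(\mathfrak{t}_1\ast \mathfrak{t}_2)
=\tfrac{1}{2}\bigl[(\mathfrak{s}_1+\epsilon_1\mathfrak{t}_1)\ast(\mathfrak{s}_2+\epsilon_2\mathfrak{t}_2)+(\mathfrak{s}_1-\epsilon_1\mathfrak{t}_1)\ast(\mathfrak{s}_2-\epsilon_2\mathfrak{t}_2)\bigr],
\]
which follows by expanding the two convolutions and noting that the cross terms $\mathfrak{s}_1\ast\mathfrak{t}_2$ and $\mathfrak{t}_1\ast\mathfrak{s}_2$ cancel. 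Given any $\epsilon$ on the unit circle, I factor $\epsilon=\epsilon_1\epsilon_2$ with $|\epsilon_1|=|\epsilon_2|=1$ (for instance $\epsilon_1=\epsilon_2=\sqrt{\epsilon}$), so that the identity produces exactly $\mathfrak{s}_1\ast \mathfrak{s}_2+\epsilon(\mathfrak{t}_1\ast \mathfrak{t}_2)$ on the left-hand side.

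By the hypothesis and Theorem 2, each of $\mathfrak{s}_i\pm\epsilon_i\mathfrak{t}_i$ lies in $\mathcal{R}(\gamma,\delta,\lambda)$ (the minus sign is handled by replacing $\epsilon_i$ by $-\epsilon_i$, still on the unit circle). Lemma 15 then gives that both convolutions on the right-hand side of the identity belong to $\mathcal{R}(\gamma,\delta,\lambda)$. Finally, $\mathcal{R}(\gamma,\delta,\lambda)$ is closed under convex combinations, since it is defined by a linear inequality of the form $\mathrm{Re}\{L[\mathfrak{f}]\}>\lambda$ with $L$ a linear differential operator, so the arithmetic mean of the two convolutions is again in $\mathcal{R}(\gamma,\delta,\lambda)$. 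As $\epsilon$ was arbitrary on the unit circle, Theorem 2 yields $\mathfrak{f}_1\ast \mathfrak{f}_2\in\mathcal{R}_H^0(\gamma,\delta,\lambda)$.

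The main obstacle I anticipate is purely the bookkeeping of the dilation identity: one has to check that the factorization $\epsilon=\epsilon_1\epsilon_2$ can always be achieved on the unit circle and that the identity really eliminates the unwanted cross terms $\mathfrak{s}_i\ast\mathfrak{t}_j$ for $i\neq j$, which are not controlled by the hypothesis. Everything else is an essentially immediate application of Theorem 2, Lemma 15, and the linearity of the defining real-part inequality.
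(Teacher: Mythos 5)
Your proof is correct and follows essentially the same route as the paper: reduce to the analytic class via Theorem 2, apply Lemma 15 to the two convolutions $(\mathfrak{s}_1\pm\epsilon_1\mathfrak{t}_1)\ast(\mathfrak{s}_2\pm\epsilon_2\mathfrak{t}_2)$, and average to cancel the cross terms (the paper simply fixes $\epsilon_1=1$, $\epsilon_2=\epsilon$ instead of your square-root factorization). The only cosmetic difference is that you justify the convex-combination closure of $\mathcal{R}(\gamma,\delta,\lambda)$ by linearity, which the paper takes for granted.
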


\begin{proof}
Suppose $\mathfrak{f}_{i}=\mathfrak{s}_{i}+\overline{\mathfrak{t}_{i}}\in 
\mathcal{R}_{H}^{0}(\gamma ,\delta ,\lambda )$ $(i=1,2)$. Then the
convolution of $\mathfrak{f}_{1}$ and $\mathfrak{f}_{2}$ is defined as $%
\mathfrak{f}_{1}\ast \mathfrak{f}_{2}=\mathfrak{s}_{1}\ast \mathfrak{s}_{2}+%
\overline{\mathfrak{t}_{1}\ast \mathfrak{t}_{2}}.$ In order to prove that $%
\mathfrak{f}_{1}\ast \mathfrak{f}_{2}$ $\in $ $\mathcal{R}_{H}^{0}(\gamma
,\delta ,\lambda ),$ we need to prove that $F_{\epsilon }=\mathfrak{s}%
_{1}\ast \mathfrak{s}_{2}+\epsilon (\mathfrak{t}_{1}\ast \mathfrak{t}%
_{2})\in \mathcal{R}(\gamma ,\delta ,\lambda )$ for each $\epsilon
(|\epsilon |=1).$ By Lemma 15, the class $\mathcal{R}(\gamma ,\delta
,\lambda )$ is closed under convolutions for each $\epsilon (|\epsilon |=1),$
$\mathfrak{s}_{i}+\epsilon \mathfrak{t}_{i}\in \mathcal{R}(\gamma ,\delta
,\lambda )$ for $i=1,2.$ Then both $F_{1}$ and $~F_{2}$ given by%
\begin{equation*}
F_{1}=(\mathfrak{s}_{1}-\mathfrak{t}_{1})\ast (\mathfrak{s}_{2}-\epsilon 
\mathfrak{t}_{2})\text{ \ and \ }F_{2}=(\mathfrak{s}_{1}+\mathfrak{t}%
_{1})\ast (\mathfrak{s}_{2}+\epsilon \mathfrak{t}_{2}),
\end{equation*}%
belong to $\mathcal{R}(\gamma ,\delta ,\lambda )$. Since $\mathcal{R}(\gamma
,\delta ,\lambda )$ is closed under convex combinations, then the function%
\begin{equation*}
F_{\epsilon }=\frac{1}{2}(F_{1}+F_{2})=\mathfrak{s}_{1}\ast \mathfrak{s}%
_{2}+\epsilon (\mathfrak{t}_{1}\ast \mathfrak{t}_{2})
\end{equation*}%
belongs to $\mathcal{R}(\gamma ,\delta ,\lambda )$. Hence $\mathcal{R}%
_{H}^{0}(\gamma ,\delta ,\lambda )$ is closed under convolution.
\end{proof}

Now we consider the Hadamard product of a harmonic function with an analytic
function which is defined by Goodloe \cite{Goodloe} as 
\begin{equation*}
\mathfrak{f}\widetilde{\ast }\varphi =\mathfrak{s}\ast \varphi +\overline{%
\mathfrak{t}\ast \varphi },
\end{equation*}%
where $\mathfrak{f}=\mathfrak{s}+\overline{\mathfrak{t}}$ is harmonic
function and $\varphi $ is an analytic function in $\mathcal{U}.$

\begin{theorem}
Let $\mathfrak{f}\in $ $\mathcal{R}_{H}^{0}(\gamma ,\delta ,\lambda )$ and $%
\varphi \in \mathcal{A}$ be such that ${\text{Re}}\left( \dfrac{\varphi (z)}{%
z}\right) >\dfrac{1}{2}$ for $z\in \mathcal{U},$ then $\mathfrak{f}%
\widetilde{\ast }\varphi $ $\in $ $\mathcal{R}_{H}^{0}(\gamma ,\delta
,\lambda ).$
\end{theorem}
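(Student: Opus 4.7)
The plan is to imitate the argument used to prove Lemma 15 and Theorem 16. As a first step, I apply Theorem 2 to convert the harmonic conclusion into an analytic one: it suffices to show that for every $\epsilon$ with $|\epsilon|=1$, the analytic function
\begin{equation*}
F_\epsilon := (\mathfrak{s}\ast\varphi) + \epsilon(\mathfrak{t}\ast\varphi) = (\mathfrak{s}+\epsilon\mathfrak{t})\ast\varphi
\end{equation*}
belongs to $\mathcal{R}(\gamma,\delta,\lambda)$. Writing $G_\epsilon := \mathfrak{s}+\epsilon\mathfrak{t}$, the hypothesis $\mathfrak{f}\in \mathcal{R}_H^0(\gamma,\delta,\lambda)$ combined with Theorem 2 already gives $G_\epsilon \in \mathcal{R}(\gamma,\delta,\lambda)$, and associativity of the Hadamard product yields $F_\epsilon = G_\epsilon \ast \varphi$.

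Next, exactly as in the proof of Lemma 15 with $\varphi$ playing the role of $F_2$, I exploit the convolution identities $(G_\epsilon \ast \varphi)'(z) = G_\epsilon'(z) \ast \varphi(z)/z$, $z(G_\epsilon \ast \varphi)''(z) = zG_\epsilon''(z) \ast \varphi(z)/z$, and $z^2(G_\epsilon \ast \varphi)'''(z) = z^2 G_\epsilon'''(z) \ast \varphi(z)/z$ to arrive at
\begin{equation*}
\frac{2\gamma F_\epsilon'(z)+2\delta z F_\epsilon''(z)+(\delta-\gamma)z^2 F_\epsilon'''(z)-2\lambda}{2(\gamma-\lambda)} = \left(\frac{2\gamma G_\epsilon'(z)+2\delta z G_\epsilon''(z)+(\delta-\gamma)z^2 G_\epsilon'''(z)-2\lambda}{2(\gamma-\lambda)}\right) \ast \frac{\varphi(z)}{z}.
\end{equation*}
The first factor inside the right-hand convolution takes values in the right half plane because $G_\epsilon \in \mathcal{R}(\gamma,\delta,\lambda)$, while the second factor $\varphi(z)/z$ is normalized so that its value at $z=0$ is $1$ (since $\varphi\in \mathcal{A}$) and satisfies $\mathrm{Re}(\varphi(z)/z) > 1/2$ on $\mathcal{U}$ by hypothesis. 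Applying Lemma 13 with $p(z) = \varphi(z)/z$ forces the entire convolution to stay in the right half plane, so $F_\epsilon \in \mathcal{R}(\gamma,\delta,\lambda)$. A final invocation of Theorem 2 then delivers $\mathfrak{f}\widetilde{\ast}\varphi \in \mathcal{R}_H^0(\gamma,\delta,\lambda)$.

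I do not anticipate a substantive obstacle, since the argument is essentially a transcription of Lemma 15 with the analytic factor $F_2$ replaced by the analytic multiplier $\varphi$. The only bookkeeping that deserves attention is the behavior of the constant $\lambda$ under convolution with $\varphi(z)/z$: because $\varphi(z)/z$ has constant term $1$, one has $\lambda \ast (\varphi(z)/z) = \lambda$, which is precisely what makes the displayed identity correct and, equally important, guarantees that the first factor on the right-hand side has value $1$ at the origin, so that the normalization hypotheses of Lemma 13 are met exactly as stated.
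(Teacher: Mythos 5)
Your proposal is correct and follows essentially the same route as the paper: reduce to the analytic class via Theorem 2, write the third-order differential expression for $F_\epsilon = G_\epsilon \ast \varphi$ as the convolution of the corresponding expression for $G_\epsilon$ with $\varphi(z)/z$, and apply Lemma 13. The only cosmetic difference is your extra remark about the first factor having value $1$ at the origin, which Lemma 13 does not actually require (its normalization hypothesis falls on $\varphi(z)/z$, which you also verify), so nothing is amiss.
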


\begin{proof}
Suppose that $\mathfrak{f}=\mathfrak{s}+\overline{\mathfrak{t}}\in $ $%
\mathcal{R}_{H}^{0}(\gamma ,\delta ,\lambda ),$ then $F_{\epsilon }=%
\mathfrak{s}+\epsilon \mathfrak{t}\in $ $\mathcal{R}(\gamma ,\delta ,\lambda
)$ for each $\epsilon (|\epsilon |=1).$ By Theorem 2, in order to show that $%
\mathfrak{f}\widetilde{\ast }\varphi $ $\in $ $\mathcal{R}_{H}^{0}(\gamma
,\delta ,\lambda ),$ we need to show that $G=\mathfrak{s}\ast \varphi
+\epsilon (\mathfrak{t}\ast \varphi )$ $\in $ $\mathcal{R}(\gamma ,\delta
,\lambda )$ for each $\epsilon (|\epsilon |=1).$ Write $G$ as $G=F_{\epsilon
}\ast \varphi ,$ and 
\begin{eqnarray*}
&&\frac{1}{2\left( \gamma -\lambda \right) }\left( 2\gamma G^{\prime
}(z)+2\delta zG^{\prime \prime }(z)+\left( \delta -\gamma \right)
z^{2}G^{\prime \prime \prime }\left( z\right) -2\lambda \right) \\
&=&\frac{1}{2\left( \gamma -\lambda \right) }\left( 2\gamma F_{\epsilon
}^{\prime }(z)+2\delta zF_{\epsilon }^{\prime \prime }(z)+\left( \delta
-\gamma \right) z^{2}F_{\epsilon }^{\prime \prime \prime }\left( z\right)
-2\lambda \right) \ast \dfrac{\varphi (z)}{z}.
\end{eqnarray*}%
Since ${\text{Re}}\left( \dfrac{\varphi (z)}{z}\right) >\dfrac{1}{2}$ \ and
\ ${\text{Re}}\{2\gamma F_{\epsilon }^{\prime }(z)+2\delta zF_{\epsilon
}^{\prime \prime }(z)+\left( \delta -\gamma \right) z^{2}F_{\epsilon
}^{\prime \prime \prime }\left( z\right) -2\lambda \}>0$ in $\mathcal{U},$
Lemma 13 proves that $G\in \mathcal{R}(\gamma ,\delta ,\lambda )$.
\end{proof}

\begin{corollary}
Let $\mathfrak{f}\in $ $\mathcal{R}_{H}^{0}(\gamma ,\delta ,\lambda )$ and $%
\varphi \in \mathcal{K},$ then $f\widetilde{\ast }\varphi $ $\in $ $\mathcal{%
R}_{H}^{0}(\gamma ,\delta ,\lambda ).$
\end{corollary}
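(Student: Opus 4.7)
The plan is essentially a one-line reduction to Theorem 16. The hypothesis $\varphi\in\mathcal{K}$ is stronger than the hypothesis ${\text{Re}}(\varphi(z)/z)>1/2$ required in Theorem 16, so all that is needed is to verify this classical implication and then invoke the previous theorem.

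First, I would recall the well-known Marx--Strohh\"acker type result: every normalized analytic convex univalent function $\varphi\in\mathcal{K}$ in $\mathcal{U}$ satisfies
\begin{equation*}
{\text{Re}}\left(\frac{\varphi(z)}{z}\right)>\frac{1}{2}\quad(z\in\mathcal{U}).
\end{equation*}
This is a standard fact in the theory of univalent functions (it follows, for instance, from the subordination $\varphi(z)/z\prec -\log(1-z)/z$ valid for $\varphi\in\mathcal{K}$, or equivalently from $\varphi(z)/z\in\mathcal{P}(1/2)$), so I would simply cite it rather than reprove it.

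With this inequality in hand, the function $\varphi$ satisfies precisely the hypothesis required by Theorem 16. Since we are also given $\mathfrak{f}\in\mathcal{R}_{H}^{0}(\gamma,\delta,\lambda)$, an application of Theorem 16 yields $\mathfrak{f}\,\widetilde{\ast}\,\varphi\in\mathcal{R}_{H}^{0}(\gamma,\delta,\lambda)$, which is exactly the conclusion.

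There is no real obstacle here; the only subtle point is knowing (or locating in the literature) the classical half-plane estimate for convex functions, but once invoked the corollary is immediate from Theorem 16.
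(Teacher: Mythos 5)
Your proof is correct and is exactly the paper's argument: the classical Marx--Strohh\"acker fact that $\varphi\in\mathcal{K}$ implies ${\text{Re}}\left(\varphi(z)/z\right)>\tfrac{1}{2}$ in $\mathcal{U}$, followed by an application of the immediately preceding theorem on $\mathfrak{f}\widetilde{\ast}\varphi$ for $\varphi$ satisfying that half-plane condition (Theorem 17 in the paper's numbering, not 16). One minor caveat: the parenthetical subordination $\varphi(z)/z\prec -\log(1-z)/z$ is not the standard route (the usual sharp statement is $\varphi(z)/z\prec 1/(1-z)$), but your alternative phrasing $\varphi(z)/z\in\mathcal{P}(1/2)$ is precisely the classical fact, so the argument stands.
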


\begin{proof}
Suppose $\varphi \in \mathcal{K},$ then ${\text{Re}}\left( \dfrac{\varphi (z)%
}{z}\right) >\dfrac{1}{2}$ for $z\in \mathcal{U}.$ As a corollary of Theorem
17, $f\widetilde{\ast }\varphi $ $\in $ $\mathcal{R}_{H}^{0}(\gamma ,\delta
,\lambda ).$
\end{proof}

\section{Radii of fully convexity, starlikeness, uniformly convexity and
uniformly starlikeness}

In this section, we obtain the radii of fully convexity and starlikeness of
the class $\mathcal{R}_{H}^{0}(\gamma ,\delta ,\lambda ).$ Estimates on $%
\lambda $ are also given that would ensure functions in $\mathcal{R}%
_{H}^{0}(1,\delta ,\lambda )$ is fully convex.

The proof of the theorems follow from the proof of the results from \cite%
{Ghosh-fully, Nagpal-fully}. First, we state the following lemmas give
sufficient conditions for functions $\mathfrak{f}$ in $\mathcal{H}^{0}$ to
belong to $\mathcal{FK}_{H}^{0}$ and $\mathcal{FS}_{H}^{\ast ,0}$
respectively.

\begin{lemma}
Let $\mathfrak{f=s+}\overline{\mathfrak{t}}$, where $\mathfrak{s}$ and $%
\mathfrak{t}$ are given by (\ref{eqH}). Further, let%
\begin{equation}
\sum \limits_{m=2}^{\infty }m^{2}\left[ \left \vert a_{m}\right \vert +\left
\vert b_{m}\right \vert \right] \leq 1.  \label{FKH}
\end{equation}%
Then $\mathfrak{f}$ is harmonic univalent in $\mathcal{U}$, and $\mathfrak{f}%
\in $ $\mathcal{FK}_{H}^{0}$.
\end{lemma}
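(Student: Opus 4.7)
The plan is to verify two things: that $\mathfrak{f}$ is sense-preserving and univalent in $\mathcal{U}$, and that every circle $\left\vert z\right\vert =r<1$ maps one-to-one onto a convex curve. Both will follow from direct power-series estimates, since the coefficient hypothesis is tailored for exactly this purpose.

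Since $m^{2}\geq m$ for $m\geq 2$, the hypothesis implies the classical Silverman-type bound $\sum_{m=2}^{\infty }m(\left\vert a_{m}\right\vert +\left\vert b_{m}\right\vert )\leq 1$. From this I would derive sense-preservation via the chain
\begin{equation*}
\left\vert \mathfrak{s}^{\prime }(z)\right\vert \geq 1-\sum_{m=2}^{\infty }m\left\vert a_{m}\right\vert \left\vert z\right\vert ^{m-1}>\sum_{m=2}^{\infty }m\left\vert b_{m}\right\vert \left\vert z\right\vert ^{m-1}\geq \left\vert \mathfrak{t}^{\prime }(z)\right\vert \qquad (z\in \mathcal{U}),
\end{equation*}
and obtain global injectivity by the standard pairwise-difference argument applied to $\mathfrak{f}(z_{1})-\mathfrak{f}(z_{2})$.

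For fully convexity, I would parametrize the image of $\left\vert z\right\vert =r$ by $\theta \mapsto \mathfrak{f}(re^{i\theta })$ and compute
\begin{equation*}
\partial _{\theta }\mathfrak{f}=i\bigl(z\mathfrak{s}^{\prime }(z)-\overline{z\mathfrak{t}^{\prime }(z)}\bigr),\qquad \partial _{\theta }^{2}\mathfrak{f}=-\bigl(z\mathfrak{s}^{\prime }(z)+z^{2}\mathfrak{s}^{\prime \prime }(z)\bigr)-\overline{z\mathfrak{t}^{\prime }(z)+z^{2}\mathfrak{t}^{\prime \prime }(z)}.
\end{equation*}
Non-decrease of the tangent angle is ${\text{Im}}(\partial _{\theta }^{2}\mathfrak{f}/\partial _{\theta }\mathfrak{f})\geq 0$, which simplifies to ${\text{Re}}(P(z)/Q(z))\geq 0$ with
\begin{equation*}
P=z\mathfrak{s}^{\prime }+z^{2}\mathfrak{s}^{\prime \prime }+\overline{z\mathfrak{t}^{\prime }+z^{2}\mathfrak{t}^{\prime \prime }},\qquad Q=z\mathfrak{s}^{\prime }-\overline{z\mathfrak{t}^{\prime }},
\end{equation*}
which in turn is equivalent to the symmetric modulus inequality $\left\vert P-Q\right\vert \leq \left\vert P+Q\right\vert $ (by the right-half-plane characterisation $\left\vert W-1\right\vert \leq \left\vert W+1\right\vert $).

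The main obstacle, and the step where the precise form of the hypothesis is used, is choosing the correct modulus inequality so that the analytic and co-analytic contributions combine cleanly. The symmetric choice above accomplishes this: direct expansion gives
\begin{equation*}
P-Q=z^{2}\mathfrak{s}^{\prime \prime }+\overline{2z\mathfrak{t}^{\prime }+z^{2}\mathfrak{t}^{\prime \prime }},\qquad P+Q=2z\mathfrak{s}^{\prime }+z^{2}\mathfrak{s}^{\prime \prime }+\overline{z^{2}\mathfrak{t}^{\prime \prime }},
\end{equation*}
and the triangle inequality produces bounds whose $\left\vert a_{m}\right\vert $-coefficients are $m(m-1)$ on the upper side and $m(m+1)$ on the lower side, with the roles reversed for $\left\vert b_{m}\right\vert $; each pair sums to $2m^{2}$. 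Hence $\left\vert P-Q\right\vert \leq \left\vert P+Q\right\vert $ reduces to $\sum_{m=2}^{\infty }m^{2}(\left\vert a_{m}\right\vert +\left\vert b_{m}\right\vert )\left\vert z\right\vert ^{m-1}\leq 1$, which holds in $\mathcal{U}$ by hypothesis. Combined with the univalence of $\mathfrak{f}$ (so each image circle is a Jordan curve whose tangent turns through exactly $2\pi $), this yields $\mathfrak{f}\in \mathcal{FK}_{H}^{0}$. A naive bound such as $\left\vert P-Q\right\vert \leq \left\vert Q\right\vert $ would produce $m(m+2)$-coefficients on the $\left\vert b_{m}\right\vert $ terms and would be too weak for the stated hypothesis, so the symmetric form is essential.
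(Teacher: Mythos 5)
Your proof is correct, but note that the paper does not actually prove this lemma: it is stated as a known result imported from the cited works (Nagpal--Ravichandran and Ghosh--Vasudevarao), so there is no internal proof to compare against. Your argument --- the Silverman-type bound $\sum m(|a_{m}|+|b_{m}|)\leq 1$ for sense-preservation and the pairwise-difference estimate for univalence, followed by recasting the curvature condition ${\text{Im}}\left( \partial _{\theta }^{2}\mathfrak{f}/\partial _{\theta }\mathfrak{f}\right) \geq 0$ as $\left\vert P-Q\right\vert \leq \left\vert P+Q\right\vert $ and checking that the $|a_{m}|$- and $|b_{m}|$-weights $m(m-1)$ and $m(m+1)$ each pair up to $2m^{2}$ --- is essentially the standard proof found in those references, and the computation checks out.
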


\begin{lemma}
Let $\mathfrak{f=s+}\overline{\mathfrak{t}}$, where $\mathfrak{s}$ and $%
\mathfrak{t}$ are given by (\ref{eqH}). Further, let%
\begin{equation}
\sum \limits_{m=2}^{\infty }m\left[ \left \vert a_{m}\right \vert +\left
\vert b_{m}\right \vert \right] \leq 1.  \label{FSH}
\end{equation}%
Then $\mathfrak{f}$ is harmonic univalent in $\mathcal{U}$, and $\mathfrak{f}%
\in $ $\mathcal{FS}_{H}^{\ast ,0}$.
\end{lemma}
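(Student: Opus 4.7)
The plan is to prove Lemma 20 via a standard Jahangiri-type coefficient argument. Fix $\mathfrak{f}=\mathfrak{s}+\overline{\mathfrak{t}}$ satisfying (\ref{FSH}); we may assume some coefficient is nonzero, else $\mathfrak{f}(z)=z$ is trivially fully starlike.

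First I would verify sense-preservation. The hypothesis gives $\sum_{m\geq 2} m|b_m| \leq 1 - \sum_{m\geq 2} m|a_m|$, so for $z\in\mathcal{U}$,
\begin{equation*}
|\mathfrak{s}'(z)| \geq 1 - \sum_{m\geq 2} m|a_m||z|^{m-1} > 1 - \sum_{m\geq 2} m|a_m| \geq \sum_{m\geq 2} m|b_m| > |\mathfrak{t}'(z)|,
\end{equation*}
so $\mathfrak{f}$ is locally univalent and sense-preserving on $\mathcal{U}$.

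Next, for full starlikeness I would verify that $\partial_\theta \arg \mathfrak{f}(re^{i\theta}) = {\text{Re}}\{D\mathfrak{f}(z)/\mathfrak{f}(z)\} > 0$ for $z=re^{i\theta}\in \mathcal{U}\setminus\{0\}$, where $D\mathfrak{f}(z) = z\mathfrak{s}'(z) - \overline{z\mathfrak{t}'(z)}$. A convenient sufficient condition is $|D\mathfrak{f}(z)+\mathfrak{f}(z)| > |D\mathfrak{f}(z)-\mathfrak{f}(z)|$, since this is equivalent to ${\text{Re}}\{D\mathfrak{f}/\mathfrak{f}\}>0$. Expanding the series gives
\begin{equation*}
D\mathfrak{f}(z)+\mathfrak{f}(z) = 2z + \sum_{m\geq 2}(m+1)a_m z^m - \overline{\sum_{m\geq 2}(m-1)b_m z^m},
\end{equation*}
\begin{equation*}
D\mathfrak{f}(z)-\mathfrak{f}(z) = \sum_{m\geq 2}(m-1)a_m z^m - \overline{\sum_{m\geq 2}(m+1)b_m z^m},
\end{equation*}
and the triangle inequality on $|z|=r\in(0,1)$ yields
\begin{equation*}
|D\mathfrak{f}+\mathfrak{f}| - |D\mathfrak{f}-\mathfrak{f}| \geq 2r - 2\sum_{m\geq 2} m(|a_m|+|b_m|)r^m = 2r\left(1 - \sum_{m\geq 2} m(|a_m|+|b_m|)r^{m-1}\right) > 0,
\end{equation*}
since for $r<1$ the sum $\sum m(|a_m|+|b_m|)r^{m-1}$ is strictly less than $\sum m(|a_m|+|b_m|) \leq 1$.

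Finally, the strict monotonicity of $\theta\mapsto\arg\mathfrak{f}(re^{i\theta})$ for every $r\in(0,1)$, combined with $\mathfrak{f}(0)=0$ and sense-preservation, shows each circle $|z|=r$ is mapped one-to-one onto a Jordan curve winding once around the origin and bounding a starlike region, giving $\mathfrak{f}\in \mathcal{FS}_H^{\ast,0}$; global univalence on $\mathcal{U}$ is an immediate corollary. I expect no real obstacle: the only mildly delicate point is choosing to compare $|D\mathfrak{f}+\mathfrak{f}|$ with $|D\mathfrak{f}-\mathfrak{f}|$, since the more naive estimate $|D\mathfrak{f}/\mathfrak{f}-1|<1$ would lead to the strictly weaker coefficient condition $\sum m|a_m| + \sum (m+2)|b_m| \leq 1$ in place of the symmetric bound (\ref{FSH}).
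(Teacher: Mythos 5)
The paper does not actually prove this lemma: it is quoted from \cite{Ghosh-fully, Nagpal-fully} (the result is the classical Jahangiri--Silverman coefficient criterion for fully starlike harmonic mappings), so there is no in-paper proof to compare against. Your argument is correct and is essentially the standard proof from those sources: the coefficient bound yields $\vert\mathfrak{s}'(z)\vert>\vert\mathfrak{t}'(z)\vert$, and the equivalence ${\text{Re}}\,w>0\Longleftrightarrow\vert w+1\vert>\vert w-1\vert$ applied to $w=D\mathfrak{f}/\mathfrak{f}$ reduces $\partial_{\theta}\arg\mathfrak{f}(re^{i\theta})>0$ to exactly the symmetric triangle-inequality estimate you carry out (and, as a bonus, the strict inequality $\vert D\mathfrak{f}+\mathfrak{f}\vert>\vert D\mathfrak{f}-\mathfrak{f}\vert$ already forces $\mathfrak{f}(z)\neq0$ for $0<\vert z\vert<1$, so the division is legitimate). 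The one step you leave implicit is why strict monotonicity of $\theta\mapsto\arg\mathfrak{f}(re^{i\theta})$ makes the circle $\vert z\vert=r$ map injectively: you need the total increase of the argument over $[0,2\pi]$ to be exactly $2\pi$, which follows from the argument principle for sense-preserving harmonic mappings (the winding number of the image curve about $0$ equals the number of zeros of $\mathfrak{f}$ in $\vert z\vert<r$, namely one), and the same degree argument then gives univalence of $\mathfrak{f}$ on all of $\mathcal{U}$; this is a citation-level detail that does not affect the validity of your proof.
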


The following identities are useful in the proof of the theorems:

\begin{eqnarray}
(i)\text{ }\sum \limits_{m=2}^{\infty }mr^{m-1} &=&\frac{r\left( 2-r\right) 
}{\left( 1-r\right) ^{2}},  \label{id1} \\
(ii)\text{ }\sum \limits_{m=2}^{\infty }m^{2}r^{m-1} &=&\frac{r\left(
4-3r+r^{2}\right) }{\left( 1-r\right) ^{3}}.  \label{id3}
\end{eqnarray}

\begin{theorem}
Let $\mathfrak{f}=\mathfrak{s}+\overline{\mathfrak{t}}\in \mathcal{R}%
_{H}^{0}(\gamma ,\delta ,\lambda ).$ Then $\mathfrak{f}$ is fully convex in $%
\left \vert z\right \vert <r_{c},$ where $r_{c}$ is the unique real root of $%
pc(r)=0$ in $(0,1),$ and where 
\begin{equation*}
pc(r)=\left( -\delta -2\gamma +\lambda \right) r^{3}+\left( 3\delta +6\gamma
-3\lambda \right) r^{2}+\left( -3\delta -7\gamma +4\lambda \right) r+\delta
+\gamma .
\end{equation*}
\end{theorem}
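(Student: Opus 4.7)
The plan is to apply Lemma~19 to the dilation $\mathfrak{f}_{r}(z):=\mathfrak{f}(rz)/r$, whose Taylor coefficients are $a_{m}r^{m-1}$ and $b_{m}r^{m-1}$. Since $\mathfrak{f}$ is fully convex in $|z|<r$ if and only if $\mathfrak{f}_{r}$ is fully convex in $\mathcal{U}$, Lemma~19 reduces the task to verifying
\[
\sum_{m=2}^{\infty}m^{2}\bigl(|a_{m}|+|b_{m}|\bigr)\,r^{m-1}\le 1.
\]

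The decisive step will be to dominate $|a_{m}|+|b_{m}|$ by a bound independent of $m$. Setting $g(m):=m^{2}[2\gamma+(\delta-\gamma)(m-1)]$, one checks $g(2)=4(\gamma+\delta)$ and (using $\delta\ge\gamma$) that $g'(m)>0$ for all $m\ge 2$, so $g$ attains its minimum over integers $m\ge 2$ at $m=2$. Theorem~7(i) therefore yields the uniform estimate $|a_{m}|+|b_{m}|\le(\gamma-\lambda)/(\gamma+\delta)$ for every $m\ge 2$. Inserting this into the displayed series and invoking identity~\eqref{id3}, namely $\sum_{m=2}^{\infty}m^{2}r^{m-1}=r(4-3r+r^{2})/(1-r)^{3}$, the sufficient condition becomes $(\gamma-\lambda)r(4-3r+r^{2})\le(\gamma+\delta)(1-r)^{3}$; a direct expansion verifies that $(\gamma+\delta)(1-r)^{3}-(\gamma-\lambda)r(4-3r+r^{2})=pc(r)$, so it is enough to check $pc(r)\ge 0$.

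It remains to show that $pc$ has a unique root $r_{c}\in(0,1)$ with $pc\ge 0$ on $[0,r_{c}]$. The endpoint values $pc(0)=\gamma+\delta>0$ and $pc(1)=2(\lambda-\gamma)<0$ supply existence by continuity. For uniqueness I would argue that $pc$ is strictly decreasing on $\mathbb{R}$: a short calculation gives the discriminant of the quadratic $pc'(r)$ as $-12(\delta+2\gamma-\lambda)(\gamma-\lambda)$, which is strictly negative under the hypotheses $0\le\lambda<\gamma\le\delta$, so $pc'$ has no real roots, and combined with $pc'(0)=-3\delta-7\gamma+4\lambda<0$ this forces $pc'<0$ everywhere. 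Hence $pc$ is strictly monotone and crosses zero at exactly one $r_{c}\in(0,1)$. I expect the main obstacle to be the uniform-bound step: retaining the full $m$-dependence of the Theorem~7(i) bound leaves the series $\sum_{m}r^{m-1}/[2\gamma+(\delta-\gamma)(m-1)]$ with no elementary closed form, and the cubic characterization arises only because that bound is sharp at $m=2$, which is precisely what allows the coarser uniform estimate to still suffice via \eqref{id3}.
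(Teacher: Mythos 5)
Your proposal is correct and follows essentially the same route as the paper: dilate to $\mathfrak{f}_{r}$, apply the coefficient criterion of Lemma 19, bound $|a_{m}|+|b_{m}|$ via Theorem 7(i) by its value at $m=2$ (namely $(\gamma-\lambda)/(\gamma+\delta)$), sum with the identity for $\sum m^{2}r^{m-1}$, and reduce to $pc(r)\geq 0$. The only cosmetic difference is that you establish $pc'<0$ via the (correctly computed) negative discriminant of the quadratic $pc'$ together with $pc'(0)<0$, whereas the paper argues through $pc''>0$ on $(0,1)$ and $pc'(1)<0$; both are valid.
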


\begin{proof}
Let $\mathfrak{f}=\mathfrak{s}+\overline{\mathfrak{t}}\in \mathcal{R}%
_{H}^{0}(\gamma ,\delta ,\lambda )$ where $\mathfrak{s}\left( z\right)
=z+\sum \limits_{m=2}^{\infty }a_{m}z^{m}$ and $\mathfrak{t}\left( z\right)
=\sum \limits_{m=2}^{\infty }b_{m}z^{m}.$ For $r\in (0,1)$, it is sufficient
to show that $\mathfrak{f}_{r}\in \mathcal{FK}_{H}^{0}$ where%
\begin{equation*}
\mathfrak{f}_{r}\left( z\right) =\frac{\mathfrak{f}\left( rz\right) }{r}%
=z+\sum \limits_{m=2}^{\infty }a_{m}r^{m-1}z^{m}+\overline{\sum
\limits_{m=2}^{\infty }b_{m}r^{m-1}z^{m}}.
\end{equation*}%
Consider this time, the sum%
\begin{equation}
S=\sum \limits_{m=2}^{\infty }m^{2}\left( \left \vert a_{m}\right \vert
+\left \vert b_{m}\right \vert \right) r^{m-1}.  \label{a}
\end{equation}%
In view of Theorem 7 (i) and (\ref{id3}), (\ref{a}) gives 
\begin{eqnarray*}
S &\leq &\sum \limits_{m=2}^{\infty }m^{2}\left( \frac{4\left( \gamma
-\lambda \right) }{m^{2}\left[ 2\gamma +\left( \delta -\gamma \right) (m-1)%
\right] }\right) r^{m-1} \\
&\leq &\frac{\gamma -\lambda }{\delta +\gamma }\sum \limits_{m=2}^{\infty
}m^{2}r^{m-1} \\
&=&\frac{\gamma -\lambda }{\delta +\gamma }\frac{r\left( 4-3r+r^{2}\right) }{%
\left( 1-r\right) ^{3}}=:X_{1.}
\end{eqnarray*}%
Lemma 19 implies that in order to show that $\mathfrak{f}_{r}\in \mathcal{FK}%
_{H}^{0}$, it is sufficient to show that $X_{1}\leq 1.$ Thus, we need to
prove that $\left( -\delta -2\gamma +\lambda \right) r^{3}+\left( 3\delta
+6\gamma -3\lambda \right) r^{2}+\left( -3\delta -7\gamma +4\lambda \right)
r+\delta +\gamma \geq 0.$

Suppose $pc(r):=\left( -\delta -2\gamma +\lambda \right) r^{3}+\left(
3\delta +6\gamma -3\lambda \right) r^{2}+\left( -3\delta -7\gamma +4\lambda
\right) r+\delta +\gamma ,$ so that $X_{1}\leq 1$ whenever $pc(r)\geq 0.$ It
is easy to observe that $pc(0)=\delta +\gamma >0$ and $pc(1)=2\left( \lambda
-\gamma \right) <0$, and hence $pc(r)$ has at least one root in $\left(
0,1\right) .$

To show that $pc(r)$ has exactly one root in $\left( 0,1\right) ,$ it is
sufficient to prove that $pc(r)$ is monotonic function on $\left( 0,1\right)
.$ A simple computation shows that 
\begin{eqnarray*}
pc^{\prime }(r) &=&\left( -6\gamma +3\lambda -3\delta \right) r^{2}+\left(
12\gamma -6\lambda +6\delta \right) r-3\delta -7\gamma +4\lambda \\
pc^{\prime }(0) &=&-3\delta -7\gamma +4\lambda =-3\left( \gamma +\delta
\right) -4\left( \gamma -\lambda \right) <0 \\
pc^{\prime }(1) &=&\lambda -\gamma <0 \\
pc^{\prime \prime }(r) &=&\left( -6\delta -12\gamma +6\lambda \right)
r+6\delta +12\gamma -6\lambda \\
&=&\left[ -6\left( \gamma +\delta \right) -6\left( \delta -\lambda \right) %
\right] r-\left[ -6\left( \gamma +\delta \right) -6\left( \delta -\lambda
\right) \right] \\
&=&\left[ -6\left( \gamma +\delta \right) -6\left( \delta -\lambda \right) %
\right] \left( r-1\right) >0\text{ \ for }r\in \left( 0,1\right) .
\end{eqnarray*}%
Hence $pc^{\prime }(r)$ is a strictly monotonic increasing function on $%
\left( 0,1\right) $. Since $pc^{\prime }(1)<0$, we conclude that $pc^{\prime
}(r)<0$ on $\left( 0,1\right) $. This shows that $pc(r)$ is strictly
monotonically decreasing on $\left( 0,1\right) $. Thus $pc(r)=0$ has exactly
one root in $\left( 0,1\right) $. Since $pc(r)$ is strictly monotonically
decreasing on $\left( 0,1\right) $ with $pc(0)>0$ and $pc(r_{c})=0,$ it is
easy to see that $pc(r)\geq 0$ for $0<r\leq r_{c}$. Hence $\mathfrak{f}$ is
fully convex in $\left \vert z\right \vert <r_{c}$.
\end{proof}

\begin{theorem}
Let $\mathfrak{f}=\mathfrak{s}+\overline{\mathfrak{t}}\in \mathcal{R}%
_{H}^{0}(\gamma ,\delta ,\lambda )$. Then $\mathfrak{f}$ is fully starlike
in $\left \vert z\right \vert <r_{s},$ where $r_{s}$ is the unique real root
of $ps(r)=0$ in $(0,1),$ and where 
\begin{equation*}
ps(r)=\left( \delta +2\gamma -\lambda \right) r^{2}+\left( -2\delta -4\gamma
+2\lambda \right) r+\delta +\gamma .
\end{equation*}
\end{theorem}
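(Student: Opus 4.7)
The plan is to mirror the argument used for Theorem 20 (fully convexity), but with the starlikeness sufficient condition (Lemma 20) in place of the convexity one (Lemma 19), and with the first-moment identity $(i)$ in place of the second-moment identity $(ii)$. Let $\mathfrak{f}=\mathfrak{s}+\overline{\mathfrak{t}}\in\mathcal{R}_H^0(\gamma,\delta,\lambda)$ and, for $r\in(0,1)$, set
\[
\mathfrak{f}_r(z)=\frac{\mathfrak{f}(rz)}{r}=z+\sum_{m=2}^{\infty}a_m r^{m-1}z^m+\overline{\sum_{m=2}^{\infty}b_m r^{m-1}z^m}.
\]
By Lemma 20 it suffices to prove that
\[
S'(r):=\sum_{m=2}^{\infty} m\bigl(|a_m|+|b_m|\bigr)r^{m-1}\le 1
\]
for all $r$ in a suitable interval $(0,r_s]$.

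First I would insert the coefficient estimate from Theorem 7(i), giving
\[
S'(r)\le \sum_{m=2}^{\infty}\frac{4(\gamma-\lambda)}{m\,[2\gamma+(\delta-\gamma)(m-1)]}\,r^{m-1}.
\]
The key monotonicity step is to observe that for all $m\ge 2$ (with equality at $m=2$)
\[
m^{2}\bigl[2\gamma+(\delta-\gamma)(m-1)\bigr]\ge 4(\delta+\gamma),
\]
which follows because both factors are increasing in $m$ and the product equals $4(\delta+\gamma)$ at $m=2$. Rearranging yields
\[
\frac{4(\gamma-\lambda)}{m\,[2\gamma+(\delta-\gamma)(m-1)]}\le \frac{(\gamma-\lambda)m}{\delta+\gamma},
\]
so that $S'(r)\le \dfrac{\gamma-\lambda}{\delta+\gamma}\sum_{m=2}^{\infty}m\,r^{m-1}$. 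Applying identity $(i)$ gives the closed form
\[
S'(r)\le \frac{\gamma-\lambda}{\delta+\gamma}\cdot\frac{r(2-r)}{(1-r)^{2}}=:Y_1.
\]

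Next I would rewrite $Y_1\le 1$ as $(\delta+\gamma)(1-r)^{2}-(\gamma-\lambda)r(2-r)\ge 0$. Expanding gives exactly
\[
ps(r)=(\delta+2\gamma-\lambda)r^{2}+(-2\delta-4\gamma+2\lambda)r+\delta+\gamma,
\]
so the radius of fully starlikeness is controlled by where $ps(r)\ge 0$.

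It remains to show that $ps$ has a unique root $r_s\in(0,1)$ and that $ps(r)\ge 0$ on $(0,r_s]$. Evaluating endpoints yields $ps(0)=\delta+\gamma>0$ and $ps(1)=\lambda-\gamma<0$, so at least one root exists in $(0,1)$. For uniqueness I would note that
\[
ps'(r)=2(\delta+2\gamma-\lambda)(r-1),
\]
and since $\delta+2\gamma-\lambda>0$ while $r-1<0$ on $(0,1)$, the function $ps$ is strictly decreasing on $(0,1)$. Hence the root $r_s$ is unique and $ps(r)\ge 0$ precisely on $(0,r_s]$, giving $\mathfrak{f}\in\mathcal{FS}_H^{\ast,0}$ for $|z|<r_s$ as claimed. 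The only mildly delicate step is the monotonicity inequality $4(\delta+\gamma)\le m^{2}[2\gamma+(\delta-\gamma)(m-1)]$, but this is immediate from $m\ge 2$ and $\delta\ge\gamma$; everything else is a direct algebraic verification parallel to Theorem 20.
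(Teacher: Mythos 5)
Your proposal is correct and follows essentially the same route as the paper: bound $\sum_{m\ge 2} m(|a_m|+|b_m|)r^{m-1}$ via Theorem 7(i), reduce to $\frac{\gamma-\lambda}{\delta+\gamma}\sum m r^{m-1}$ using $m^{2}[2\gamma+(\delta-\gamma)(m-1)]\ge 4(\delta+\gamma)$, apply identity (i), and analyze $ps(r)$ exactly as the paper does. Your explicit justification of the intermediate inequality (which the paper leaves unexplained) and your correct citation of Lemma 20 (the paper mistakenly writes Lemma 19 at that point) are minor improvements, not a different approach.
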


\begin{proof}
Let $\mathfrak{f}=\mathfrak{s}+\overline{\mathfrak{t}}\in \mathcal{R}%
_{H}^{0}(\gamma ,\delta ,\lambda ).$ For $r\in (0,1)$, let%
\begin{equation*}
\mathfrak{f}_{r}\left( z\right) =\frac{\mathfrak{f}\left( rz\right) }{r}%
=z+\sum \limits_{m=2}^{\infty }a_{m}r^{m-1}z^{m}+\overline{\sum
\limits_{m=2}^{\infty }b_{m}r^{m-1}z^{m}}
\end{equation*}%
Consider the sum%
\begin{equation}
S=\sum \limits_{m=2}^{\infty }m\left( \left \vert a_{m}\right \vert +\left
\vert b_{m}\right \vert \right) r^{m-1}  \label{b}
\end{equation}%
Using Theorem 7(i) and (\ref{id1}), (\ref{b}) gives 
\begin{eqnarray*}
S &\leq &\sum \limits_{m=2}^{\infty }m\left( \frac{4\left( \gamma -\lambda
\right) }{m^{2}\left[ 2\gamma +\left( \delta -\gamma \right) (m-1)\right] }%
\right) r^{m-1} \\
&\leq &\frac{\gamma -\lambda }{\delta +\gamma }\sum \limits_{m=2}^{\infty
}mr^{m-1} \\
&=&\frac{\gamma -\lambda }{\delta +\gamma }\frac{r\left( 2-r\right) }{\left(
1-r\right) ^{2}}=:X_{2.}
\end{eqnarray*}%
In view of Lemma 19 in order to prove that $\mathfrak{f}_{r}\in \mathcal{FSH}%
^{\ast ,0}$, it is sufficient to show that $X_{2}\leq 1$. This implies that
it suffices to show $\left( \delta +2\gamma -\lambda \right) r^{2}+\left(
-2\delta -4\gamma +2\lambda \right) r+\delta +\gamma \geq 0.$

Suppose $ps(r)=\left( \delta +2\gamma -\lambda \right) r^{2}+\left( -2\delta
-4\gamma +2\lambda \right) r+\delta +\gamma ,$ so that $X_{2}\leq 1$
whenever $ps(r)\geq 0.$ It is easy to observe that $ps(0)=\delta +\gamma >0$
and $ps(1)=\lambda -\gamma <0$, and hence $ps(r)$ has a real root in $\left(
0,1\right) .$

To show that $ps(r)$ has exactly one root in $\left( 0,1\right) ,$ it is
sufficient to prove that $ps(r)$ is monotonic function on $\left( 0,1\right)
.$ A simple computation shows that 
\begin{eqnarray*}
ps^{\prime }(r) &=&2\left( \delta +2\gamma -\lambda \right) \left(
r-1\right) <0,\text{\ \ for }r\in \left( 0,1\right) \\
ps^{\prime }(0) &=&2\left( \lambda -\delta -2\gamma \right) , \\
ps^{\prime }(1) &=&0, \\
ps^{\prime \prime }(r) &=&2\left( \delta +2\gamma -\lambda \right) >0.
\end{eqnarray*}%
Hence $ps^{\prime }(r)$ is a strictly monotonically increasing function on $%
\left( 0,1\right) $. Since $ps^{\prime }(1)=0$, we conclude that $ps^{\prime
}(r)<0$ on $\left( 0,1\right) $. This shows that $ps(r)$ is strictly
monotonically decreasing on $\left( 0,1\right) $. Thus $ps(r)$ has exactly
one root in $\left( 0,1\right) $. Since $ps(r)$ is strictly monotonically
decreasing on $\left( 0,1\right) $ with $ps(0)>0$ and $ps(r_{s})=0$, it is
easy to see that $ps(r)\geq 0$ for $0<r\leq r_{s}$. Hence $\mathfrak{f}$ is
fully starlike in $\left \vert z\right \vert <r_{s}$.
\end{proof}

\begin{lemma}
\cite[Corollary 3.2]{Rosihan2018} Let $\lambda <1,$ $\delta \geq 1,$ and $%
\mathfrak{f}\in \mathcal{R}(1,\delta ,\lambda )$. If $\lambda $ satisfies 
\begin{equation}
7-3\delta =4\lambda +4(1-\lambda )\sum \limits_{m=1}^{\infty }\frac{%
2m(3-\delta )+\left( \delta -5\right) }{\left( m+1\right) \left( m\left(
\delta -1\right) +2\right) },  \label{eqconvexRosihan}
\end{equation}%
then $\mathfrak{f}$ is convex in $\mathcal{U}.$
\end{lemma}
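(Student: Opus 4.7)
My strategy is to reduce the convexity of an arbitrary $\mathfrak{f}\in\mathcal{R}(1,\delta,\lambda)$ to a single scalar inequality evaluated on the extremal member of the class, and then to convert that inequality into the stated identity (\ref{eqconvexRosihan}) that pins down the critical $\lambda$.

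First I would restate the class condition as a subordination. Specialising the integral inversion in the proof of Theorem~\ref{theodist} to $\gamma=1$, the defining inequality is equivalent to
$$
\mathfrak{f}'(z)+\delta z\mathfrak{f}''(z)+\tfrac{\delta-1}{2}z^{2}\mathfrak{f}'''(z)\;\prec\;\frac{1+(1-2\lambda)z}{1-z},
$$
and the same double-integral inversion then yields the convolution subordination $\mathfrak{f}'\prec \phi\ast h$ with the explicit factors already computed there. Inserting the Moebius majorant itself for $\psi$ produces the extremal function $\mathfrak{f}_{0}$ with
$$
\mathfrak{f}_{0}'(z)=1+\sum_{m=1}^{\infty}\frac{4(1-\lambda)}{(m+1)\bigl(m(\delta-1)+2\bigr)}z^{m},
$$
whose Taylor coefficients are real and positive.

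Second, I would invoke the Alexander criterion: $\mathfrak{f}$ is convex in $\mathcal{U}$ iff $\operatorname{Re}\{1+z\mathfrak{f}''(z)/\mathfrak{f}'(z)\}>0$. Writing $\mathfrak{f}'$ as the convolution $\phi\ast[\lambda+(1-\lambda)p]$ obtained by Herglotz-representing $\psi$ and pushing the representing measure through the iterated $(u,v)$-integration of Theorem~\ref{theodist}, the functional $\operatorname{Re}\{1+z\mathfrak{f}''/\mathfrak{f}'\}$ is minimised over the class by the Moebius choice $p_{0}(z)=(1+z)/(1-z)$, i.e.\ precisely on $\mathfrak{f}_{0}$. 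Convexity of the whole class then reduces to convexity of $\mathfrak{f}_{0}$, which in turn reduces to non-negativity of the real part of $1+z\mathfrak{f}_{0}''(z)/\mathfrak{f}_{0}'(z)$ at the single boundary point where this real part attains its infimum. Expanding $\mathfrak{f}_{0}'$ and $\mathfrak{f}_{0}''$ there as explicit series in $m\ge 1$, clearing the denominator $\mathfrak{f}_{0}'$, combining the two sums over the common denominator $(m+1)(m(\delta-1)+2)$, and reindexing so that the summation indices align, the resulting single series carries exactly the numerator $2m(3-\delta)+(\delta-5)$ and leaves the constant $7-3\delta$ isolated on the other side, producing (\ref{eqconvexRosihan}).

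The main obstacle is the last algebraic collapse: the two shifted series produced by $\mathfrak{f}_{0}'$ and $\mathfrak{f}_{0}''$ at the extremal point must fuse into one series with \emph{exactly} the numerator $2m(3-\delta)+(\delta-5)$, so that the cross-terms arising from clearing $\mathfrak{f}_{0}'$ cancel correctly and the $m=0$ contribution lands on the left-hand side as the constant $7-3\delta$. A secondary, more conceptual obstacle is the rigorous justification that $\mathfrak{f}_{0}$ extremises the \emph{nonlinear} ratio $1+z\mathfrak{f}''/\mathfrak{f}'$ rather than only the linear functional $\mathfrak{f}'$; the cleanest route is to combine the Herglotz averaging with a Hallenbeck-Ruscheweyh-type minimum principle on the resulting linear functional of the representing measure, verifying along the way that the minimum is attained at a single point of the support of the extremal measure so that the linear reduction is valid.
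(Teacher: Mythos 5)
First, a point of comparison: the paper does not prove this lemma at all --- it is imported verbatim as \cite[Corollary 3.2]{Rosihan2018}, so there is no internal proof to measure your attempt against; I can only assess the attempt on its own terms and against the standard technique in that literature.

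Your proposal has a genuine gap at its load-bearing step. The double-integral representation and the subordination $\mathfrak{f}'\prec\phi\ast h$ (Theorem \ref{theodist} with $\gamma=1$) make $\mu\mapsto \mathfrak{f}'(z_{0})$ and $\mu\mapsto \mathfrak{f}''(z_{0})$ \emph{affine} in the Herglotz measure $\mu$ representing $\psi$, and extremality of the single-atom choice $p_{0}(z)=(1+z)/(1-z)$ is therefore automatic for linear functionals (coefficients, $\operatorname{Re}\mathfrak{f}'$, $|\mathfrak{f}''|$). But $\operatorname{Re}\{1+z\mathfrak{f}''/\mathfrak{f}'\}=\operatorname{Re}\{(\mathfrak{f}'+z\mathfrak{f}'')\overline{\mathfrak{f}'}\}/|\mathfrak{f}'|^{2}$ is a ratio of \emph{quadratic} forms in $\mu$, so the extreme-point reduction you invoke does not apply; the minimising measure could a priori have two or more atoms, and the Hallenbeck--Ruscheweyh minimum principle you cite governs subordination of linear integral averages, not this quotient. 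You flag this yourself as a "secondary obstacle," but it is in fact the entire content of the lemma: everything before and after it is bookkeeping. The standard way to close it (and, as far as I can tell, the route taken in \cite{Rosihan2018}) is to avoid the nonlinear extremal problem altogether by using a linearising sufficient condition for convexity such as $|z\mathfrak{f}''(z)|\leq\operatorname{Re}\mathfrak{f}'(z)$ on $|z|=r$, which yields $|z\mathfrak{f}''/\mathfrak{f}'|\leq 1$ and hence $\operatorname{Re}\{1+z\mathfrak{f}''/\mathfrak{f}'\}\geq 0$; one then bounds $|z\mathfrak{f}''|$ from above via the coefficient estimates of Theorem 7 (with $\gamma=1$) and $\operatorname{Re}\mathfrak{f}'$ from below via the subordination of Theorem \ref{theodist}, each of which \emph{is} a linear problem solved by the extremal series, and lets $r\to 1^{-}$ to obtain the threshold equation (\ref{eqconvexRosihan}). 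Finally, your last paragraph asserts rather than verifies that the two shifted series "fuse into one series with exactly the numerator $2m(3-\delta)+(\delta-5)$"; since you have not fixed which sufficient condition is being evaluated, that computation cannot even be set up yet, let alone checked. As written, the proposal is a plausible sketch of the right ingredients but not a proof.
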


\begin{theorem}
Suppose $\mathfrak{f}\in \mathcal{R}_{H}^{0}(1,\delta ,\lambda )$ with $%
\lambda <1,$ $\delta \geq 1.$ If $\lambda $ satisfies $\left( \ref%
{eqconvexRosihan}\right) $, then $\mathfrak{f}$ is fully convex in $\mathcal{%
U}.$
\end{theorem}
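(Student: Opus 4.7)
The plan is to reduce the harmonic statement to an analytic one via Theorem 2, apply Lemma 22 to members of the associated analytic family, and then invoke the known implication that stable harmonic convex mappings are fully convex (as cited from \cite{Hernandez, Nagpal-fully, Nagpaljkms} in the introduction).

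First I would take $\mathfrak{f}=\mathfrak{s}+\overline{\mathfrak{t}}\in \mathcal{R}_{H}^{0}(1,\delta ,\lambda )$ and, by Theorem 2, note that $F_{\epsilon }=\mathfrak{s}+\epsilon \mathfrak{t}\in \mathcal{R}(1,\delta ,\lambda )$ for every $\epsilon$ with $|\epsilon|=1$. This converts the hypothesis on the harmonic function into a uniform family of hypotheses on analytic functions, where Lemma 22 can be brought to bear.

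Next, since the given $\lambda$ satisfies the identity \eqref{eqconvexRosihan}, Lemma 22 applies to each $F_{\epsilon }\in \mathcal{R}(1,\delta ,\lambda )$ with $|\epsilon|=1$, yielding that every such $F_{\epsilon }$ is convex (and in particular univalent) in $\mathcal{U}$. In the terminology recalled in the introduction, this is precisely the statement that $\mathfrak{f}$ is stable harmonic convex in $\mathcal{U}$.

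Finally, I would invoke the result (proved in \cite{Hernandez, Nagpal-fully, Nagpaljkms}) that every stable harmonic convex mapping in $\mathcal{U}$ is fully convex in $\mathcal{U}$, to conclude that $\mathfrak{f}\in \mathcal{FK}_{H}^{0}$. There is no real obstacle here; the proof is essentially a chain of three previously established results (Theorem 2, Lemma 22, and the stable$\Rightarrow$fully implication), and the only care needed is to verify that the $\epsilon$-uniformity in Theorem 2 is preserved through Lemma 22, which it is because the hypothesis \eqref{eqconvexRosihan} is independent of $\epsilon$ and applies to every $F_{\epsilon }$ simultaneously.
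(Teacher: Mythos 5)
Your proposal is correct and follows essentially the same route as the paper: pass to $F_{\epsilon}=\mathfrak{s}+\epsilon\mathfrak{t}$ via Theorem 2, apply Lemma 22 to conclude each $F_{\epsilon}$ is convex, and then invoke the stable-harmonic-convex $\Rightarrow$ fully convex implication (the paper cites Corollary 2.4 of Nagpal--Ravichandran for this last step). No gaps.
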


\begin{proof}
Let $\lambda <1,$ $\delta \geq 1$ and $\mathfrak{f}\in \mathcal{R}%
_{H}^{0}(1,\delta ,\lambda ).$ Then $F_{\epsilon }=\mathfrak{s}+\epsilon 
\mathfrak{t}\in \mathcal{R}(1,\delta ,\lambda )$ for each $\epsilon \left(
\left \vert \epsilon \right \vert =1\right) .$ If $\lambda $ satisfies (\ref%
{eqconvexRosihan}), then $F_{\epsilon }$ is convex in $\mathcal{U}.$ In view
of \cite[Corollary 2.4]{Nagpaljkms}, it follows that $\mathfrak{f}$ is fully
convex in $\mathcal{U}.$
\end{proof}

\begin{problem}
Find $\gamma $, $\delta $ and $\lambda $ so that functions $\mathfrak{f}\in 
\mathcal{R}_{H}^{0}(\gamma ,\delta ,\lambda )$ are fully convex in $\mathcal{%
U}$.
\end{problem}

\end{document}